\documentclass[a4, 10pt,leqno]{amsart}
\usepackage{pifont}
\usepackage{mathtools}
\usepackage{mathabx}
\usepackage{bbm}
\usepackage{amsthm}
\usepackage{mathtools}
\usepackage{mathtools,accents}
\usepackage{amssymb,amscd,amsthm,enumerate}
\usepackage{mathrsfs}
\usepackage{a4wide}
\usepackage{eqnarray}
\usepackage{enumitem}
\usepackage[utf8]{inputenc}
\usepackage[T1]{fontenc}
\usepackage{lmodern}
\usepackage[english]{babel}
\usepackage{microtype}
\usepackage{float}
\usepackage{esint}
\usepackage{physics}
\usepackage{a4wide}
\usepackage{algorithmic}
\usepackage[ruled]{algorithm2e}
\usepackage[active]{srcltx}
\usepackage{lipsum}
\usepackage{upgreek}
\usepackage{listings}
\usepackage{wasysym}
\usepackage{csquotes}
\usepackage{enumerate}
\usepackage{color}
\usepackage[usenames,dvipsnames]{xcolor}
\usepackage{xcolor}
\definecolor{ultrablue}{rgb}{0.0,0.0, 1}
\definecolor{jigari}{rgb}{0.39,0.0, 0.0}
\usepackage{url}
\usepackage{hyperref}
\hypersetup{
	linktoc=page,
	colorlinks,
	linkcolor={ultrablue},
	citecolor={ultrablue},
	urlcolor={ultrablue}
}
\hypersetup{breaklinks=true}
\usepackage{soul}
\usepackage{graphicx}
\usepackage{tikz}
\usepackage{tikzpagenodes}
\usepackage{scrlayer-scrpage}
\usepackage{tikz}
\usetikzlibrary{calc,spy}
\usetikzlibrary{shapes.geometric, arrows}
\usepackage{pgfplots}
\usetikzlibrary{intersections, pgfplots.fillbetween}
\usepackage{tikz}
\usetikzlibrary{matrix}
\usepackage[all]{xy}
\usepackage{subfig}
\newlength{\myeqskip}  
\AtBeginDocument{%
	\setlength\abovedisplayskip{\myeqskip}%
	\setlength\belowdisplayskip{\myeqskip}%
	\setlength\abovedisplayshortskip{\myeqskip-\baselineskip}%
	\setlength\belowdisplayshortskip{\myeqskip}}
\allowdisplaybreaks
\numberwithin{equation}{section}
\usepackage[a4paper,bindingoffset=0in,%
left=1.2in,
right=1.2in,
top=1.6in,
bottom=1.6 in,%
footskip=1mm]{geometry}
\theoremstyle{plain}
\newtheorem*{theorem*}{Main Theorem}
\newtheorem{lemma}{Lemma}[section]
\newtheorem{theorem}[lemma]{Theorem}

\theoremstyle{definition}
\newtheorem{definition}[lemma]{Definition}
\newtheorem{remark}[lemma]{Remark}

\theoremstyle{remark}

\newcounter{example}

\makeatletter
\def\@xnamedef#1{\expandafter\protected@xdef\csname #1\endcsname}
\def\no@harm{} 
\def\ead@au#1{\protected@edef\@ead@au{#1}}
\patchcmd\runningauthor@fmt{\global\edef}{\protected@xdef}{}{}
\patchcmd\runningauthor@fmt{\global\edef}{\protected@xdef}{}{}
\patchcmd\author@fmt{\edef}{\protected@edef}{}{}
\patchcmd\add@xtok{\xdef}{\protected@xdef}{}{}
\makeatother
\newcommand{\secref}[1]{\textsection\thinspace\ref{#1}}

\newcommand{\R}{\mathbb R}
\newcommand{\Sp}{\mathbb S}

\DeclareMathOperator{\proj}{proj}
\DeclareMathOperator{\Hess}{Hess}

\DeclareMathOperator{\Ric}{Ric}
\DeclareMathOperator{\Riem}{Riem}
\DeclareMathOperator{\scal}{scal}

\newcommand{\K}{\mathcal K}
\newcommand{\y}{{\footnotesize \textbf{\textit{y}}}}
\newcommand{\vecv}{\footnotesize \textbf{\textit{v}}}
\newcommand{\vecw}{{\footnotesize \textbf{\textit{w}}}}

\newcommand{\dist}{\mathrm{d}}

\newcommand{\cpt}{{\textsf{CPT}}\,}
\usepackage{accents}
\newcommand*{\dt}[1]{%
		\accentset{\mbox{\bfseries .}}{#1}}
\newcommand*{\ddt}[1]{%
	\accentset{\mbox{\bfseries .\hspace{-0.25ex}.}}{#1}}
\newcommand*{\dddt}[1]{%
	\accentset{\mbox{\bfseries .\hspace{-0.25ex}.\hspace{-0.25ex}.}}{#1}}

\makeatletter
\newcommand*\bigcdot{\mathpalette\bigcdot@{.5}}
\newcommand*\bigcdot@[2]{\mathbin{\vcenter{\hbox{\scalebox{#2}{$\m@th#1\bullet$}}}}}
\makeatother
\newcount\stylenum  \newdimen\styledim 
\def\varstyle#1{\mathchoice{\stylenum=0 #1}{\stylenum=1 #1}{\stylenum=2 #1}{\stylenum=3 #1}}
\def\mathaxis{\fontdimen22\ifcase\stylenum 
	\textfont\or\textfont\or\scriptfont\or\scriptscriptfont\fi2 }
\def\setstyledim{\styledim=\ifcase\stylenum .1em\or.1em\or.07em\or.05em\fi\relax}
\def\sqdot{\mathbin{\varstyle{\raise\mathaxis\hbox{\setstyledim
				\kern\styledim 
				\vrule width1.2\styledim height.6\styledim depth.6\styledim
				\kern\styledim}}}}

		\DeclareMathAlphabet{\mathdutchcal}{U}{dutchcal}{m}{n}
		\SetMathAlphabet{\mathdutchcal}{bold}{U}{dutchcal}{b}{n}
		\DeclareMathAlphabet{\mathdutchbcal}{U}{dutchcal}{b}{n}
		\DeclareSymbolFont{myletters}{OML}{ztmcm}{m}{it}
		\DeclareMathSymbol{\nicelambda}{\mathord}{myletters}{"15}
		\usepackage{nicefrac}
\usepackage{stackengine}
\setstackEOL{\\}
\newcounter{tmpctr}
\newcommand\fancyRoman[1]{%
	\setcounter{tmpctr}{#1}%
	\setbox0=\hbox{\kern.2pt\textsf{\Roman{tmpctr}}}%
	\setstackgap{S}{-.6pt}%
	\Shortstack{\rule{\dimexpr\wd0+.1ex}{.7pt}\\\copy0\\
		\rule{\dimexpr\wd0+.1ex}{.7pt}}%
}
\newcommand{\bbGamma}{{\mathpalette\makebbGamma\relax}}
\newcommand{\makebbGamma}[2]{%
	\raisebox{\depth}{\scalebox{1}[-1]{$\mathsurround=0pt#1\mathbb{L}$}}%
}
\newcommand{\eps}{\varepsilon}
		\usepackage{pifont}
		
\newcommand{\restr}{\raisebox{-.1908ex}{$\big|$}}
\makeatother
\newcommand{\ident}{\raisebox{0pt}{\scalebox{1.1}{$\mathbbm{1}$}}\hspace{-1pt}}

\makeatletter
\newcommand{\tpitchfork}{%
	\vbox{
		\baselineskip\z@skip
		\lineskip-.52ex
		\lineskiplimit\maxdimen
		\m@th
		\ialign{##\crcr\hidewidth\smash{$-$}\hidewidth\crcr$\pitchfork$\crcr}
	}%
}
\makeatletter
\newcommand{\tpmod}[1]{{\@displayfalse\pmod{#1}}}
\makeatother

\usepackage{stackengine,scalerel}

\newcommand\overstar[1]{\ThisStyle{\ensurestackMath{%
			\setbox0=\hbox{$\SavedStyle#1$}%
			\stackengine{0pt}{\copy0}{\kern0\ht0\smash{\SavedStyle\star}}{O}{c}{F}{T}{S}}}}

\newcommand\dunderline[3][-1pt]{{%
		\sbox0{#3}%
		\ooalign{\copy0\cr\rule[\dimexpr#1-#2\relax]{\wd0}{#2}}}}
\makeatletter
\@namedef{subjclassname@2020}{%
	\textup{2020} Mathematics Subject Classification}
\makeatother
\makeatletter 
\def\l@subsection{\@tocline{2}{0pt}{3pc}{6pc}{}}
\makeatother
\makeatletter 
\def\l@subsection{\@tocline{2}{0pt}{3pc}{6pc}{}}
\makeatother

\def\bysame{\leavevmode\hbox to3em{\hrulefill}\thinspace}

\begin{document}
\title[\scriptsize {On classification of Finslerian spaces with nontrivial concircular transformations}]{\small On classification of Finslerian spaces \\ with nontrivial concircular transformations} 
%

\author[\protect \scriptsize Z. Fathi]{Zohreh Fathi}
\author[\protect \scriptsize S. Lakzian]{Sajjad Lakzian$^{^{\scalebox{1}{$\star$}}}$}
\address{\noindent -- Z. Fathi \& S. Lakzian \newline \noindent School of Mathematics, \newline Institute for Research in Fundamental Sciences (IPM), \newline P. O. Box 19395-
	5746, Tehran, Iran}
\address{\noindent -- S. Lakzian \newline \noindent Department of Mathematical Sciences\newline Isfahan University of  Technology (IUT) \newline Isfahan 8415683111, Iran.}
\email{\href{mailto:slakzian@iut.ac.ir}{slakzian@iut.ac.ir}}
\subjclass[2020]{53C60; 53C24}
\keywords{Finslerian metric, conformal diffeomorphism, circle-preserving diffeomorphism, geodesically complete, Riccati equation, constant flag curvature, Einstein manifolds, constant scalar curvature, Ricci flat.}
\thanks{ZF is partially supported by IPM grant No. 1403530043 and SL is partially supported by IPM grant No. 1405530313.}
\thanks{SL is supported by the INSF Grant No. 4030556, awarded by the “On
	the Frontiers of Mathematical Sciences” program.}
\thanks{$\raisebox{2pt}{$\star$}$\textit{the corresponding author}}
\maketitle
%
\begin{abstract}
\par \textsl{We prove that in a non-trivial conformal circle-preserving transformation (\cpt for short), $e^{\sigma}F$ on a (forward or backward) complete Finslerian manifold $(M,F)$, the conformal factor has at most two critical points. Then a diffeomorphism classification based on the number of critical points as well as some curvature rigidity properties - of Finslerian manifolds that admit nontrivial conformal \cpt\!s - is presented.}
\end{abstract}
\date{\today}
\section{Introduction}\label{sec:intro}
As an immediate generalization of a geodesic in a Riemannian manifold $(M^n,g)$, one can consider geodesic circles that are smooth curves with constant first geodesic curvature $\kappa \ge 0$, and vanishing second geodesic curvature. Therefore, a unit-speed geodesic circle $c(s)$ solves the ODE
\begin{align*}
c'''(s) + \kappa^2 c'(s) = 0, \quad \text{where} \quad ' := \nabla_{c'}.
\end{align*}
\par Circle-preserving transformations ({\cpt}\!s) as diffeomorphisms that preserve geodesic circles (do not necessarily preserve the parametrization) are the main objects of study in concircular geometry. Concircular geometry was pioneered by Yano~\cite{Yano1}--\cite{Yano5}; important contributions to the theory include~\cite{Tachib,Nag,Ish-Tash,Ish,Tash1,Tash2,VO,NM,Kan,Fer,Kuh}.
\par Let $\varphi$ be a $\cpt$\!, then it is by now standard that $\varphi$ is a conformal transformation~\cite{VO}; thus, $\varphi^*(g) = \rho^{-2}g$ for some smooth function $\rho$ 
that solves the PDE 
\begin{align*}
\nabla^2 \rho = \nicefrac{1}{n} \left(\Delta \rho\right) g, \quad \rho>0,
\end{align*}
or equivalently
\begin{align}\label{eq:Riem-pde}
	\nabla^2 \sigma - d\sigma^2 = \nicefrac{1}{n}\left(\Delta \sigma - \|\nabla \sigma\|^2 \right) g, \quad \sigma := - \ln \rho;
\end{align}
see~\cite{Yano2, Kuh}.
Incidentally, this is also the PDE that describes Einstein conformal transformations of an Einstein metric $g$; see~\cite{Br}.
\par As a result of the collected works~\cite{Tash1,Tash2,Kan, Kuh}, there is a conformal diffeomorphic classification available in the complete setting. Indeed, letting ${\sf Crit}$ denote the set of critical points of $\sigma$, we have $\texttt{\#} ({\sf Crit}) \le 2$, and based on the cardinality of ${\sf Crit}$, one of the following three cases can occur.
\begin{itemize}
	\item[\large $\bullet$] Case \textsf{I}: ${\sf Crit} = \varnothing$. 
	\par In this case, $(M^n,g)$ is complete, non-compact, and conformally diffeomorphic to an isometric product $\R \times \Sigma^{n-1}$ in which $\Sigma^{n-1}$ is complete. 
	\medskip
	\item[\large $\bullet$] Case \textsf{II}: $\texttt{\#} \, {\sf Crit} = 1$. 
	\par In this case, $(M^n,g)$ is complete, non-compact, and conformally diffeomorphic to either the flat Euclidean space or the hyperbolic space.
	\medskip
	\item[\large $\bullet$] Case \textsf{III}: $\texttt{\#} \, {\sf Crit} = 2$. 
	\par In this case, $(M^n,g)$ is conformally diffeomorphic to the standard sphere; furthermore, if $g$ is of constant scalar curvature, then $(M^n,g)$ is homothetic to the standard sphere. 
\end{itemize}
In all cases, $M\smallsetminus {\sf Crit}$ is diffeomorphic to $I \times \Sigma$ and the metric has a warped product decomposition; where $I$ is an open interval and $\Sigma$ is the ideal regular level hypersurface of $\sigma$ (such level hypersurfaces are all diffeomorphic). In addition, regarding the curvature rigidity, the following hold true:  
\smallskip
\par \emph{The metric $g$ is Einstein (resp. of constant sectional curvature) if and only if the cross sections $\Sigma$ are Einstein (resp. of constant sectional curvature) and the normalized scalar curvature equals $-\nicefrac{\rho_{uuu}}{\rho_u}$, where $u$ is the coordinate that integrates $\partial_u = \nicefrac{\grad\sigma}{\|\grad\sigma\|}$}.
\par \emph{Moreover, $(M,g)$ is isometric to a standard Euclidean or hyperbolic or sphere or an isometric product $I \times \Sigma$ where $\Sigma$ is Ricci-flat and $M$ has negative scalar (or Ricci) curvature.} 
\subsection*{{\cpt}\!s in the Finslerian setting}
\emph{Throughout, $(M^n,F)$ denotes a (forward or backward) complete Finslerian manifold and $g$ (or $g(x,\y)$) denotes the resulting Finslerian metric.} 
\par In the Finslerian setting, by the aid of the metric compatibility of the Cartan connection, one can draw parallels to the Riemannian theory. 
\par Indeed, the unit-speed geodesic circles are still given essentially by the ``same'' ODE  
\begin{align}\label{eq:geod-circ}
c'''(s) + \|c''(s)\|^2c'(s) = 0, \quad F(c'(s)) = 1,
\end{align}
where this time $' := \nabla_{\dt{\widetilde{c}}}$ in which $\widetilde{c}$ is the natural lift of $c$ to the tangent bundle $TM$ and \emph{$\nabla$ is the Cartan connection}. For a curve with a general parameter, this amounts to
\begin{align*}
\dddt{c} - 3v^{-2}g_{\dt{c}}(\dt{c}, \ddt{c})\ddt{c} \equiv 0 \mod \dt{c}, \quad v := F(\dt{c}(t)).
\end{align*}
For more details on derivation of \hyperref[eq:geod-circ]{\eqref{eq:geod-circ}} and the ODE aspects of it, see~\cite{SY}. 
\par Consequently, a Finslerian \cpt is a diffeomorphism, say $\varphi$, that preserves geodesic circles as point sets, i.e., it does not necessarily preserve the constant speed.  
\par \emph{We note that calling such maps a transformation is a bit of a misnomer (yet a nomenclature that we do not mind to use) since the inverse might not necessarily preserve circles and consequently, in general, \cpt\!s do not form a group.} 
\par What is not parallel to the Riemannian theory is the fact that a Finslerian \cpt is not necessarily conformal. We refer to \cite{SY} and \cite{FL1} for more details and examples.
\par The Finslerian PDE that describes a conformal Finslerian \cpt turns out to essentially ``parallel'' the Riemannian one. Shen-Yang~\cite{SY} proved that 
\begin{align*}
	\varphi^*F =:\bar{F} = \rho^{-1} F
\end{align*}
 is a \cpt if and only if 
\begin{align*}
\rho_{i\vert j} = \lambda g,
\end{align*}
for some scalar function $\lambda$; where, $_{\vert}$ denotes the horizontal covariant differentiation w.r.t. the Cartan (or equivalently Chern) connection. Additionally, they showed that isotropic or constant flag curvature (in $dim \ge 3$) as well as being Einstein is preserved. 
\par Note, by Knebelman's theorem, that the conformal factor can only be a scalar function, i.e., must be independent of $\y$. Furtheremore, $\bar{F} = e^{\sigma} F$ is equivalent to $\bar{g} = e^{2\sigma} g$. 
\par The coordinate-free version of the characterizing PDE with the explicit geometric RHS has been obtained in \cite{FL1} as
\begin{align}\label{eq:Fins-geom-pde}
	\nabla_{X^h} (\grad \sigma) -  d\sigma(X)\grad\sigma = f\left(x\right)X, \quad \forall X \in T_xM,
\end{align}
where $\grad \sigma$ is the gradient of $\sigma$ defined using the Legendre transform; see~\cite{FL1}.
\par The PDE has the equivalent forms
\begin{align}\label{eq:Fins-cpt-hess}
	\Hess_F \sigma - d\sigma\otimes d\sigma \circ \ident\times\ident = f\ident, \quad \text{on} \quad SM, 
\end{align} 
and
\begin{align}\label{eq:Hess-pde}
	{^{\sf horiz}\nabla}^2 \sigma - d\sigma^2 = fg,
\end{align}
for the scalar function, $f$, that takes the geometric form
\begin{align*}
	f(x) = \sigma''\restr_{\nicefrac{\y}{F(\y)}} - F(\grad \sigma)^2 + e^{2\sigma}\widebar{\kappa}^2;
\end{align*}
here, $\sigma''\restr_{\nicefrac{\y}{F(\y)}}$ here means second derivative in the direction of the unit speed geodesic with initial velocity $\nicefrac{\y}{F(\y)}$ (indeed the Hessian) and $\widebar{\kappa}$ is the curvature of this geodesic w.r.t. $\widebar{F}$; see~\cite{FL1} for more details. 
\subsection*{Classification of Finslerian \cpt\!s}
In some Finslerian spaces, the conformal \cpt\!s become very rigid. For instance, the authors established that under the following circumstances, $(M,F)$ is Riemannian: 
\begin{itemize}
	\item If in addition, $(M,F)$ is Berwaldian whose flag curvatures do not vanish on a dense subset; meaning, curvatures of all flags at every point of a dense subset are non-zero~\cite{FL1};
	\medskip
	\item If in addition, ${\sf Crit} \neq \varnothing$ and $F$ is Berwaldian or reversible~\cite{FL2}. 
\end{itemize}
\par In these notes, we intend to study the general such spaces $(M,F)$. 
\subsubsection*{\small {\bf Diffeomorphism classification and curvature rigidity}}
\hfill
 \par \dunderline{1.2pt}{\small \textit{\textbf{\textsf{Notation/Terminology/Convention:}}}}
\begin{itemize}
	\item \textsl{Throughout these notes, we suppose $(M,F)$ is a (forward or backward) complete Finslerian manifold that admits a non-trivial conformal \cpt, $e^{\sigma} F$. In addition, $\Sigma$ denotes either a portion or the whole of a regular level hypersurface of $\sigma$. We set $\rho = e^{-\sigma}$.}
	\smallskip
	\item	\textsl{We refer to flag curvatures of flags that are tangent to regular level hypersurfaces $\Sigma$ as tangential flag curvatures and denote them by $\K^{\parallel}$; similarly, flag curvatures where the flag pole is the normal direction ${\sf n}$ and the other vector is tangent to $\Sigma$ are referred to as normal flag curvatures and denoted by $\K^{\perp}$.}
	\smallskip
	\item \textsl{An osculating metric is a Riemannian metric obtained by plugging a non-vanishing vector field into the Finslerian metric; see~\hyperref[sec:geom-lin]{\secref{sec:geom-lin}} for more detials.}	
\end{itemize}
\begin{definition}
 	A Finslerian space $(M,F)$ is said to be \underline{tame} whenever
 	\begin{align*}
 		0 < \inf_{\substack{
 				x \in M\\
 				\textit{\textbf{u}},\,\y \, \in S_xM
 		}} g^{\y}({\footnotesize \textit{\textbf{u}}},{\footnotesize \textit{\textbf{u}}}) \le \sup _{\substack{
 				x \in M\\
 				\textit{\textbf{u}},\,\y \, \in S_xM
 		}} g^{\y}({\footnotesize \textit{\textbf{u}}},{\footnotesize \textit{\textbf{u}}}) < \infty.
 	\end{align*}
 \end{definition}
\begin{theorem*}\label{thrm:classification}
	Suppose $(M,F)$ is a (forward or backward) complete Finslerian manifold that admits a non-trivial conformal \cpt, $e^{\sigma} F$. Then, 
	\begin{itemize}
		\item $\sigma$ has at most two critical points.
		\smallskip
		\item All regular level hypersurfaces of $\sigma$ are diffeomorphic (denoted by $\Sigma$).
		\smallskip
		\item The curvature properties of being/having \emph{constant flag curvature, isotropic flag curvature, sectional flag curvature, Strongly (Cartan-) Einstein, Einstein, vector-indepenedent scalar curvature (Cartan or Akbarzadeh), and constant scalar curvature (Cartan or Akbarzadeh)} are all inherited by $\Sigma$ from $M$; for the definitions of various curvatures, see~\hyperref[subsec:curvature]{\secref{subsec:curvature}}. 
	\end{itemize}
Moreover, the following classification holds true. 
	\begin{enumerate}
		\item[\textrm{\small \bf Case~$\sf I$:}] \label{item:class-1} ${\sf Crit} = \varnothing$.
		\begin{enumerate}
			\smallskip
			\item \label{item:class-1-a} $M^n$ is diffeomorphic to the product $\R \times \Sigma^{n-1}$. 
			\smallskip
			\item \label{item:class-1-b} There is an osculating Riemannian metric $\mathdutchcal{g}$ such that $e^{2\sigma} \mathdutchcal{g}$ is a Riemannian \cpt. 
			\smallskip
			\item \label{item:class-1-c} If furthermore, $(M,F)$ is \underline{tame}, then $(M,\mathdutchcal{g})$ is conformally diffeomorphic to a Riemannian isometric product $\left(\R, ds^2 \right) \times \left(\Sigma, \mathdutchcal{g}_{\sf level} \right)$ where $\left(\Sigma, \mathdutchcal{g}_{\sf level} \right)$ is complete.
			\smallskip 
			\item \label{item:class-1-d} If $(M,F)$ has constant normal flag curvatures $\K^{\perp}$ and constant Catan/Akbarzadeh scalar curvature (in particular, if it is strongly (Cartan-)Einstein) equal to $\K^{\perp}$, then, $\Sigma$ has vanishing Cartan/Akbarzadeh scalar curvature (in particular, is (Cartan) Ricci flat) w.r.t. the induced Finslerian strucutre. 
		\end{enumerate}
		\medskip
		\item[\textrm{\small \bf Case~$\sf II$:}] \label{item:class-2} $\texttt{\#} \, {\sf Crit} = 1$.
		\begin{enumerate}
			\smallskip
			\item \label{item:class-2-a} $M^n$ is diffeomorphic to $\R^n$.
			\smallskip
			\item \label{item:class-2-b} The regular level sets of $\sigma$ are diffeomorphic to $\Sp^{n-1}$ and have constant positive flag curvature w.r.t the induced Finslerian structure.
			\smallskip
			\item \label{item:class-2-c} If $(M,F)$ has constant Catan or Akbarzadeh scalar curvature, then both Catan and Akbarzadeh scalar curvatures are constant and coincide; moreover, 
			\begin{align*}
			{\sf s} = \mathdutchcal{s}= \K^{\parallel} = \K^{\perp} > 0.
			\end{align*}
		\end{enumerate}
		\medskip
		\item[\textrm{\small \bf Case~$\sf III$:}] \label{item:class-3} $\texttt{\#} \, {\sf Crit} = 2$. 
		\begin{enumerate}
			\smallskip
			\item \label{item:class-3-a} $M^n$ is diffeomorphic to the sphere $\Sp^n$. 
			\smallskip
			\item \label{item:class-3-b} The regular level sets of $\sigma$ are diffeomorphic to $\Sp^{n-1}$ and have constant positive flag curvature w.r.t the induced Finslerian structure.
			\smallskip
			\item \label{item:class-3-c} If $(M,g)$ has constant Catan or Akbarzadeh scalar curvature, then both Catan and Akbarzadeh scalar curvatures are constant and coincide; additionally, 
			\begin{align*}
			{\sf s} = \mathdutchcal{s}= \K^{\parallel} = \K^{\perp} > 0.
			\end{align*}
		\end{enumerate} 
	\end{enumerate}
\end{theorem*}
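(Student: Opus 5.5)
The plan is to reduce everything to the behaviour of $\sigma$ (equivalently $\rho=e^{-\sigma}$) along integral curves of $\grad\sigma$, using the characterizing PDE \eqref{eq:Fins-geom-pde}. Set $X=\grad\sigma$ in \eqref{eq:Fins-geom-pde}. Since the Cartan connection is metric compatible and the Legendre transform gives $d\sigma(\grad\sigma)=F(\grad\sigma)^2$, the unit normal ${\sf n}=\grad\sigma/F(\grad\sigma)$ is parallel along itself on $M\smallsetminus{\sf Crit}$. Hence its integral curves are unit-speed geodesics (the Finslerian analogue of the Riemannian fact), and $F(\grad\sigma)$ is constant on each regular level set.

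Next I would pass to $\rho$. Along a unit-speed geodesic $\gamma$ the PDE \eqref{eq:Hess-pde} reads $(\rho\circ\gamma)''=-\rho f\circ\gamma$. Along the normal geodesics, $\rho$ therefore depends only on the arclength $u$, and $f$ is a function of $u$ as well. Consequently the gradient flow of $\sigma$, reparametrized by $u$, maps regular level sets onto one another by diffeomorphisms. On $M\smallsetminus{\sf Crit}$ we obtain a splitting $I\times\Sigma$ with $F$ decomposing as a Finslerian warped product: $\rho_u^{-2}$ times the level structure, plus $du^2$ in the normal direction.

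The critical points are handled as in the Riemannian proof of Tashiro/Kühnel. At $p\in{\sf Crit}$ the Hessian of $\rho$ equals $-\rho f(p)\,g^{\y}$ for every $\y$. If $f(p)=0$, then $\rho$ solves a linear ODE along every geodesic from $p$ with zero initial data, so $\rho$ is constant and the \cpt is trivial. So $f(p)\neq0$, and $p$ is a nondegenerate extremum that is isolated. Near $p$ every geodesic from $p$ is a normal geodesic, so small geodesic spheres are level sets, and $\Sigma\cong\Sp^{n-1}$. Completeness (forward or backward, with the appropriate choice of flow direction) then lets each normal geodesic be extended until it reaches another critical point or runs to infinity. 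Together with connectedness this gives at most two critical points and the three topological models $\R\times\Sigma$, $\R^n$ and $\Sp^n$. Curvature inheritance comes from Gauss-type equations for the warped product: the tangential flag curvatures equal $(\K^{\parallel}_\Sigma-\rho_u'^2)/\rho_u^2$-type expressions, with $\K^{\perp}=-\rho_{uuu}/\rho_u$. Each listed property (constant or isotropic flag curvature, Einstein, and so on) therefore transfers to $\Sigma$. In Cases II and III, smoothness at the critical point forces $\Sigma$ to be a round-type sphere of constant positive flag curvature. Comparing the Cartan and Akbarzadeh scalar curvatures through these formulas then yields ${\sf s}=\mathdutchcal{s}=\K^{\parallel}=\K^{\perp}>0$. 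In Case I, the hypothesis that the scalar curvature equals the constant $\K^{\perp}$ forces the $\Sigma$-scalar curvature to vanish.

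For Case I(b) I would take the osculating metric $\mathdutchcal{g}=g^{{\sf n}}$ obtained by plugging in the nonvanishing field ${\sf n}$. Along ${\sf n}$ the Cartan Hessian agrees with the Levi-Civita Hessian of $\mathdutchcal{g}$, so \eqref{eq:Riem-pde} holds for $\mathdutchcal{g}$ and $e^{2\sigma}\mathdutchcal{g}$ is a Riemannian \cpt. Tameness makes $\mathdutchcal{g}$ bi-Lipschitz to $F$, hence complete, and the Riemannian Case I classification gives (c). The main obstacle I expect is precisely this step. One must show the osculating Levi-Civita Hessian coincides with the Cartan horizontal Hessian in all directions $X$, not only along ${\sf n}$, which requires the Cartan tensor terms $C({\sf n},\cdot,\cdot)$ contracted with $\grad\sigma$ to vanish. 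I would first try to prove this using that ${\sf n}$ is a geodesic field whose level sets are totally umbilic. A secondary difficulty is the one-sided completeness needed to extend normal geodesics in both directions.
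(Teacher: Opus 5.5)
Your architecture matches the paper's: gradient flow plus completeness for the topology, Gauss-type formulas on the splitting for inheritance, and $g^{\sf n}$ plus tameness for Case I. However, several load-bearing steps are unsupported or wrong.

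\textbf{The Finslerian splitting.} It does not follow from ``$\rho$ depends only on $u$''. What you derive is that ${\sf n}$ is a unit geodesic field, $F(\grad \sigma)$ is constant on level sets, and $\rho=\rho(u)$. This gives the product $I\times\Sigma$ and information at the single reference vector ${\sf n}$ (for instance the warped form \eqref{eq:warped} of $g^{\sf n}$, via the computation below). A splitting of $F$ itself, and Gauss-type formulas for flag curvatures with flagpole $y\neq{\sf n}$, concern $g_{y}$ and $\Riem^{y}$ for every $y$. They need:
\begin{itemize}
\item[(i)] $C_{1ij}=0$ and $G^i_1=0$;
\item[(ii)] $y^1$-independence of $\Riem^{y}$;
\item[(iii)] that the induced Cartan connection on $\Sigma$ is the Cartan connection of the induced metric;
\item[(iv)] for your one-sided completeness worry, reversibility of normal geodesics.
\end{itemize}
Such a splitting is also very rigid. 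If $F^2=du^2+\rho_u^2F_{\sf ideal}^2$ held for every $y$, smoothness of $F^2$ near $y={\sf n}$ (where the tangential part of $y$ vanishes) would force $F_{\sf ideal}^2$ to be quadratic. Note also that the factor is $\rho_u^{2}$ on the ideal metric, not $\rho_u^{-2}$ on the level one. The paper does not derive any of this in the present proof; it imports it as (Reg1)--(Reg6) and (Crit1)--(Crit5) from its references [FL1, FL2]. You must either cite those results or prove them.

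\textbf{Nondegeneracy.} Your argument fails as written. Along a geodesic from $p$, the equation $(\rho\circ\gamma)''=-f\rho$ has initial data $(\rho(p),0)\neq(0,0)$, so $f(p)=0$ forces nothing. The working mechanism is one derivative higher: $\rho_{uuu}=-\K^{\perp}\rho_u$. Here $\K^{\perp}$ is a flag curvature, hence bounded near $p$, so this is a linear ODE for $\rho_u$ along radial geodesics with data $\rho_u(0)=\rho_{uu}(0)=0$ when $f(p)=0$. It is the Finslerian counterpart of the Riemannian identity $d\lambda=-\tfrac{1}{n-1}\Ric(\nabla\rho)$ for $\nabla^2\rho=\lambda g$, and it is what (Crit3) and (Crit5) encode.

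\textbf{Scalar-curvature conclusions (I(d), II(c), III(c)).} Your sketch gives no mechanism for these. The paper substitutes $\zeta=\rho_u$, $\K^{\perp}=-\zeta_{uu}/\zeta$, $\mathcal L=\zeta_u^2/\zeta^2$ and $\mathdutchcal{s}_{\sf level}=\zeta^{-2}\mathdutchcal{s}_{\sf ideal}$ into \eqref{eq:scalar-normalized}.
\begin{itemize}
\item[(a)] Constancy of $\mathdutchcal{s}$ gives a second-order ODE with first integral $\zeta^{n-2}(\zeta_u^2+\mathdutchcal{s}\zeta^2-\mathdutchcal{s}_{\sf ideal})$. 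This vanishes because $\zeta\to0$ at the critical point.
\item[(b)] Hence $\zeta_{uu}+\mathdutchcal{s}\zeta=0$. So $\K^{\perp}=\mathdutchcal{s}$ and $\K^{\parallel}=\K_{\sf level}-\mathcal L=(\mathdutchcal{s}_{\sf ideal}-\zeta_u^2)/\zeta^2=\mathdutchcal{s}$.
\item[(c)] The sign is decided by the zeros of $\zeta$. If $\mathdutchcal{s}>0$, $\zeta$ has a second zero, i.e.\ a second critical point, so this is exactly Case III.
\item[(d)] In Case II one gets $\mathdutchcal{s}\le 0$, as the paper's own proof of II(c) in effect shows. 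Flat $\R^n$ with $\rho=1+|x|^2$ shows your ``$>0$'' cannot be proved there.
\end{itemize}
Likewise, I(d) needs the global analysis on $u\in\R$. The equation $\rho_{uuu}=-\K^{\perp}\rho_u$ with $\rho>0$ and $\rho_u\neq0$ on all of $\R$ forces $\K^{\perp}<0$ and a single exponential. That yields $\mathcal L=|\K^{\perp}|$ and hence $\mathdutchcal{s}_{\sf level}=\K^{\perp}+\mathcal L=0$. Without it, $\rho_u=ae^{ku}+be^{-ku}$ with $k^2=-\K^{\perp}$ gives $\mathdutchcal{s}_{\sf ideal}=-4k^2ab$, which need not vanish.

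\textbf{Case I(b).} The obstacle you flag is not real. Let $D$ be the Chern connection with reference vector ${\sf n}$ (the paper allows the PDE to be read with Chern or Cartan). Set $\mathdutchcal{g}=g^{\sf n}$ and write ${}^{\mathdutchcal{g}}\nabla-D=S$. Then
$\mathdutchcal{g}(S(X,Y),{\sf n})=C_{\sf n}(D_X{\sf n},Y,{\sf n})+C_{\sf n}(D_Y{\sf n},X,{\sf n})-C_{\sf n}(D_{\sf n}{\sf n},X,Y)=0$,
because $C_{\sf n}({\sf n},\cdot,\cdot)=0$ and ${\sf n}$ is geodesic. Since $d\sigma=F(\grad \sigma)\,\mathdutchcal{g}({\sf n},\cdot)$, the two Hessians of $\sigma$ agree as bilinear forms. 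So the Finslerian PDE at reference ${\sf n}$ becomes \eqref{eq:Riem-pde} for $\mathdutchcal{g}$. Agreement with $\Hess_F \sigma(X)$, i.e.\ at reference $X$, fails in general by a $\mathbf{T}$-curvature term, and it is not needed. This route to I(b) needs neither umbilicity nor the warped form \eqref{eq:warped} that the paper relies on.
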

\section{Preliminaries}\label{sec:prelim}
\subsection{Curvature}\label{subsec:curvature}
\subsubsection*{\small {\bf $\pmb{hh}$-curvature tensors}}
In these notes, we are concerned with the Riemannian-like curvature tensor (as opposed to purely Finslerian curvature tensors) of a Finslerian connection $\nabla$; this curvature tensor is known as the $hh$-curvature. For the Cartan connection, we denote the $hh$-curvature by $\Riem^{\y}$, and for the Chern connection, by $\mathcal{R}iem^{\y}$. 
\par In local coordinates, one sets
\begin{align*}
	\Riem(\partial_i,\partial_j)\partial_k = \Riem^{\;\;l}_{k\;\;ij}\partial_l,
\end{align*}
and similarly for $\mathcal{R}iem$. 
(compare to \cite{AIM} and \cite{BF} where they use the convention $\Riem^{\;\;l}_{k\;\;ji}\partial_l$ for this).
\par The Cartan and Chern $hh$-curvature tensors are related via
\begin{align*}
	\Riem(X,Y)Z = \mathcal{R}iem(X,Y)Z + S(X,Y)Z,
\end{align*}
where
\begin{align*}
	S(X,Y)Z =  -\nabla_{{\frak R}(X,Y)}Z.
\end{align*}
Equivalently,
\begin{align*}
	S(\partial_k,\partial_j)\partial_h =  - C^i_{hr} {\frak R}^{r}_{\;\;kj},
\end{align*}
where
\begin{align*}
	{\frak R}^k_{\;ij} = \delta_j G^k_i - \delta_i G^k_j,
\end{align*}
is the $R^1$-torsion vector; see~\cite[(2.3.2.5)]{AIM}.
\subsubsection*{\small {\bf Flag curvature}}
\par The flag curvature $\K(v\wedge w)$ is the sectional curvature ${\mathrm sec}(v,w)$ w.r.t. $g^v$ and can be computed as
\begin{align*}
	\K(\vecv \wedge \vecw) = \frac{\left<\Riem^{\vecv}\left(\vecv, \vecw\right)\vecw, \vecv \right>_{\vecv}}{\|\vecv\|_{\vecv}^2\|\vecw\|^2_{\vecv} - \left< \vecv, \vecw \right>_{\vecv}^2} = \frac{\left<\mathcal{R}iem^{\vecv}\left(\vecv, \vecw\right)\vecw, \vecv \right>_{\vecv}}{\|\vecv\|_{\vecv}^2\|\vecw\|^2_{\vecv} - \left< \vecv, \vecw \right>_{\vecv}^2},
\end{align*}
\subsubsection*{\small {\bf Ricci curvature tensors}}
\par The Ricci curvature tensor (as a symmetric tensor) has two straightforward analogues (more versions in the literature). One is
\begin{align*}
	\mathcal{R}ic_{\sf tensor} := \mathcal{R}ic_{ij} \; dx^i\otimes dx^j, \quad \mathcal{R}ic_{ij} := \nicefrac{1}{2} \, \dt{\partial}_{j} \dt{\partial}_{j} \left(\mathcal{R}ic\right),
\end{align*}
where $\mathcal{R}ic$ is Akbarzadeh's scalar Ricci,
\begin{align*}
	\mathcal{R}ic(\y) := F^2(\y)\sum\limits_{i=1}^{n-1} \K(\y\wedge e_i),
\end{align*}
in which
$
\left\{\nicefrac{\y}{F(\y)}, e_1,e_2, \cdots, e_{n-1}  \right\},
$, 
form an orthonormal basis w.r.t. $g^{\y}$; note $\mathcal{R}ic$ is independent of the choice of frame; see e.g.,~\cite[Section 5.3]{Oh}. Alternatively,  
\begin{align*}
	\mathcal{R}ic(\y) = \mathbf{R}^i_{\;i}(\y),
\end{align*}
where $\mathbf{R}$ is a natural curvature tensor that makes the Jacobi field equation hold true. The curvature tensor $\mathbf{R}$ satisfies
\begin{align*}
	\mathbf{R}^i_{\;\,j} (\y) = {\mathcal{R}iem^{\;\,i}_{l\;\;jk}}(\y) y^ky^l = {\Riem^{\;\,i}_{l\;\;jk}}(\y) y^ky^l;
\end{align*}
e.g., \cite[Section 5.4]{Oh}

\par A different Ricci tensor is obtained by taking the usual trace of $\Riem$ to obtain ${\Ric}$. This will be called the Cartan-Ricci tensor. 
\par In particular 
\begin{align*}
	\mathcal{R}ic_{\sf tensor}(\y)(\y,\y) = \mathcal{R}ic(\y) = {\Ric}(\y)(\y,\y),
\end{align*}
\par For another interesting notion of symmetric Ricci tensor obtained from the $\mathcal{R}iem$ by tracing and symmetrization, see~\cite{Li-Shen}.
\par As is by now a standard definition (originated by Akbarzadeh~\cite{AZ}), we say $(M,F)$ is \emph{Einstein} whenever
\begin{align*}
\mathcal{R}ic(c,\y) = (n-1)K(x)F^2(\y),
\end{align*}
for some function $K(x)$;i.e., when $F^{-2}\mathcal{R}ic$ is isotropic. This is equivalent to
\begin{align*}
	\mathcal{R}ic_{\sf tensor}(x,\y) = (n-1)K(x)g(x,\y);
\end{align*}
see~[Bao-Robles, page 216]. 
\par Similarly, we say $(M,F)$ is \emph{Cartan-Einstein} whenever 
\begin{align*}
	\Ric(c,\y) = (n-1)K(x)g(x,\y),
\end{align*}
for some function $K(x)$.  By the Riemannian Schur's lemma, in the Riemannian setting, both being Einstein or Cartan-Einstein reduce to the usual definition of an Einstein metric. 
\par \emph{When the function $K(x)$ is constant, we call the Finslerian metric strongly Einstein or, respectively, strongly Cartan-Einstein.} 
\subsubsection*{\small {\bf Scalar curvatures}}
\par The \emph{Cartan scalar curvature}, ${\scal}$, is the trace of ${\Ric}$. We also set
\begin{align*}
	\text{$\mathdutchcal{s}cal(x,\y)$}:= \tr \mathcal{R}ic_{\sf tensor}(x,\y),
\end{align*}
and call it the \emph{Akbarzadeh scalar curvature}.
\par In the study of constant curvature cases, the normalized (Cartan or Akbarzadeh) scalar curvatures, given by
\begin{align*}
	\textsf{s}(x,\y) := \nicefrac{1}{n(n-1)} \;\scal(x,\y), \quad \text{and} \quad \mathdutchcal{s}(x,\y) := \nicefrac{1}{n(n-1)} \; \mathdutchcal{s}cal(x,\y),
\end{align*}
come in handy.
\par When 
\begin{align*}
	{\mathsf{s}}(x,\y) = r(x),
\end{align*}
we say the scalar curvature is \emph{vector-independent} and when $r(x) \equiv r$ is cosntant, we say $F$ is of constant scalar curvature. Similar definitions also are in effect for $\mathdutchcal{s}cal$. 
\subsection{$Y$-calculus}\label{subsec:Y-calc}
Let $Y = Y^i\partial_i$ be a local non-vanishing vector field. Let us denote the resulting Riemannian metric $g^{Y}$ by $\mathdutchcal{g}$. The metric  $\mathdutchcal{g}$ is also referred to as an osculating Riemannian metric. 
\par By direct computation using the chain rule, the Christoffel symbols of the Levi-Civita connection of ${\mathdutchcal{g}}$ satisfy
\begin{align*}
	{^{\mathdutchcal{g}}\Gamma}^i_{jk}(x) &= \overstar{\Gamma}^i_{jk}(Y) \\
	&+ g^{il} \Big( C_{lkm} \left( G^m_j + \partial_{j} Y^m  \right) - C_{ljm} \left(  G^m_k - \partial_{k} Y^m  \right) + C_{jkm} \left( G^m_ l - \partial_{l} Y^m  \right) \Big)(Y);
\end{align*}
e.g., see~\cite[Chapter 4]{Oh}.
This connection is obviously ${\mathdutchcal{g}}$-compatible, but it does not match Cartan's covariant differentiation along curves (when $Y$ is an extension of the tangent field of the curve).
\par The Barthel-Matsumoto affine connection $^Y\bbGamma$ (or Cartan $Y$-connection) is another ${\mathdutchcal {g}}$-compatible affine connection on the bundle $TM$ given by Christoffel symbols
\begin{align*}
	^Y\bbGamma^i_{jk}(x) := \overstar{\Gamma}^i_{jk}(Y) + C^i_{jm}(Y) \left( G^m_k(Y) + \partial_{k} Y^m \right).
\end{align*}
\par It turns out that along a curve $c$, if $Y$ is chosen to be an extension of $\dt{c}$, the covariant differentiation w.r.t. the Barthel-Matsumoto connection coincides with that if Cartan's connection in the sense that,
\begin{align*}
	^Y{\mathbb{D}}_{\dt{c}}U = \nabla_{\dt{\tilde{c}}} (U),
\end{align*}
holds for all vector fields $U$ along $c(t)$. Here, $\widetilde{c}$ - a curve in $TM$ - is the natural lift of $c$; thus, it satisfies
\begin{align*}
	\dt{\widetilde{c}}= \dt{c}^l\delta_l + (\ddt{c}^{\,l} + 2G^l)\dt{\partial}_l.
\end{align*}
See~\cite{Mat2} and \cite[2.6.3]{AIM} for more details on the Barthel-Matsumoto connection. 
\section{Geometry of the underlying space}\label{sec:highlights}
\par In this section, we briefly highlight some recent developments about the geometry $(M,F)$ by the authors, presented in \cite{FL1} and \cite{FL2}. 
\par Around a regular point of $\sigma$, we consider the set of coordinates $\left\{u^1=u, u^2,\cdots, u^n\right\}$ where $u$ is the arclength of the normal geodesics, i.e., the coordinate $u$ is the one that integrates 
\begin{align*}
	\partial_u := \nicefrac{\grad^{\y} \sigma}{F(\grad^{\y} \sigma)} = \nicefrac{\grad \sigma}{F(\grad \sigma)},
\end{align*}
and $u^2,\cdots, u^{n-1}$ are a set of coordinates on $\Sigma$. 
\par \emph{
	The flow of the vector field $\grad \sigma$ provides a diffeomorphism between the regular level sets of $\sigma$ as long as it does not encounter a critical point of $\sigma$. As a result, any chosen local coordinate system $(u^2, \cdots, u^n)$ for an open domain $U \subset \Sigma_{t}$ at time $t \in (s_1,s_2)$ can be extended to a coordinate system $u^1=u, u^2, \cdots, u^n$ on the open domain $(s_1,s_2)\times \Sigma_t$ given that $\sigma^{-1}\left( [s_1,s_2] \right) \cap {\sf Crit} = \varnothing$}.
\par \dunderline{1.2pt}{\small \textit{\textbf{\textsf{Notation:}}}}
\begin{itemize}
\item \textsl{Throughout, the small case Greek letters signify the indices $2, \cdots, n$.}
\smallskip
\item \textsl{The unit vector $\nicefrac{\grad\sigma}{\|\grad \sigma\|}$ on the regular domain, is denoted by ${\sf n}$ and forms a geodesic unit vector field.}
\end{itemize}
\subsection{The regular domain}
On the regular domain of $\sigma$ and in coordinates $(u, u^\alpha)$, the following hold true.
 \begin{enumerate}[label=\textbf{\footnotesize (Reg\arabic*)}]
 	\item \label{item:highlight-1}
 	 $f = \sigma_{uu} - \sigma_u^2 = -\nicefrac{\rho_{uu}}{\rho}$.
 	\medskip
 	\item[] For more details and the proofs, see~\cite{FL1}. 
 	\medskip
 	\item \label{item:highlight-2}
 	For $\vecv \neq 0$, we have
 	\begin{align}\label{eq:metric-decomp}
 		g(x,\y) = du^2 + (e^{-\sigma}\sigma_u)^2\; {g}_{\sf ideal}(x,\vecv),\quad \vecv = (y^1,\cdots,y^{n-1}),
 	\end{align}
 	for a $(u^1,y^1)$-independent Finslerian metric ${g}_{\sf ideal}$ on $\Sigma$. Equivalently,
 	\begin{align*}
 		F^2 = |\cdot|^2 + (e^{-\sigma}\sigma_u)^2 F_{\sf ideal}^2,
 	\end{align*}
 	holds true where $|\cdot|$ is the standard fundamental function on $I$ and $F_{\sf ideal}$ is a constant Finslerian fundamental function on $\Sigma$. 
 	\par  In particular,
 	\begin{align}\label{eq:warped}
 		g^{\sf n} = du^2 + (e^{-\sigma}\sigma_u)^2 \mathdutchcal{g}_{\sf ideal} = du^2 + \left(\rho'\right)^2 \mathdutchcal{g}_{\sf ideal}.
 	\end{align}
 	for a $u$-independent Riemannian metric $\mathdutchcal{g}_{\sf ideal}$ on the regular hypersurface $\Sigma$.
 	\medskip 
 	\item \label{item:highlight-3} The induced connection from the Cartan connection on the level hypersurfaces $\Sigma$ coincides with the Cartan connection of the induced Finslerian metric. Moreover, the same holds for the Chern connection.  
 	\medskip
 \par \dunderline{1.2pt}{\small \textit{\textbf{\textsf{Notation/Terminology:}}}}
 \textsl{In the sequel, the index ``${\sf ideal}$'' indicates the use of the metric $g_{\sf ideal}$ (which is $(u,y^1)$-independent) on $\Sigma$ while the index ``${\sf level}$'' indicates the use of $g_{\sf level} := \rho^2_u g_{\sf ideal}$. Moreover, $\Sigma$ equipped with $g_{\sf ideal}$ (or sometimes as a smooth manifold entity) will be called the ideal hypersurface.}
  \medskip
 	\item \label{item:highlight-4} Setting
 	\begin{align*}
 		\mathcal{L}:= \left(\nicefrac{\rho_{uu}}{\rho_u}\right)^2, \quad \K^{\perp} := - \nicefrac{\rho_{uuu}}{\rho_u} = \left(f -\; \nicefrac{df(\grad\sigma)}{\|\grad\sigma\|^2}\right),
 	\end{align*}
 the following identities hold for the $hh$-curvature tnesor: 
 	\begin{enumerate}
 		\medskip
 		\item
 		${^{\sf level}\Riem}^{\vecv}(X,Y)Z  = {^{\sf ideal}\Riem}^{\vecv}(X,Y)Z $.
 		\medskip
 		\item \label{item:curv-1}
 		\hspace{-29pt}
 		\makebox[\linewidth]{\(\begin{aligned}[t]
 				{\Riem}^{\y}(X,Y)Z &= {^{\sf ideal}\Riem}^{\vecv}(X,Y)Z - \mathcal{L} \Big( \left< Y,Z\right>_{\!\vecv}X - \left<X,Z \right>_{\!\vecv}Y \Big)\\
 				&= {^{\sf ideal}\Riem}^{\vecv}(X,Y)Z - \rho^2_{uu} \Big( \left< Y,Z\right>^{\sf ideal}_{\!\vecv}X - \left<X,Z \right>^{\sf ideal}_{\!\vecv}Y \Big).
 			\end{aligned}\)} 
 		\medskip
 		\item \label{item:curv-2}
 		$
 		\Riem^{\y}(X,Y){\sf n} = 0
 		$.
 		\medskip
 		\item \label{item:curv-3}
 		$
 		\Riem^{\y}(X,{\sf n}){\sf n} = (f -\; \nicefrac{df(\grad\sigma)}{\|\grad\sigma\|^2})X = \K^{\perp} X
 		$.
 		\medskip
 		\item \label{item:curv-4}
 		$
 		\Riem^{\y}(X,{\sf n})Y =  -\K^{\perp} \left< X,Y \right>_{\!\vecv} {\sf n}
 		$.
 		\medskip
 		\item \label{item:curv-5}
 		
 		\hspace{-90pt}\makebox[\linewidth]{\(\begin{aligned}[t]
 					\K\left( \y \wedge Y \right) = \K\left( \y \wedge Y \right) = {\K}_{\sf level}(\vecv \wedge Y) - \mathcal{L},
 			\end{aligned}\)} \hfill
 		\newline in particular,
 		\begin{align*}
 			\K\left( X \wedge Y \right) = {\K}_{\sf level}(X \wedge Y) - \mathcal{L}.
 		\end{align*}
 		\item \label{item:curv-6}
 		$
 		\K\left( {\sf n} \wedge X \right) = \K^{\perp}
 		$.
 	\end{enumerate}
 	\medskip
 	\item \label{item:highlight-5} Verbatim statements hold for the Chern curvature tensor $\mathcal{R}iem$.
 	\medskip	
 	\item \label{item:highlight-6} $\Riem$ and $\mathcal{R}iem$ are independent of $y^1$ when $\vecv \neq 0$. 
 	\end{enumerate}
 	For more details and the proofs, see~\cite{FL2}. 
 	\subsection{Near critical points}
 	\par Regarding the critical points and the geometry in close vicinity thereof, the following were established in \cite{FL2}. 
 	\begin{enumerate}[label=\textbf{\footnotesize (Crit\arabic*)}]
 	\item \label{item:const-curv-1} Critical points of $\sigma$ are isolated.
 	\smallskip
 	\item \label{item:const-curv-2}
 	Near each critical point $p$, the level sets of the forward or backward distance function from $p$ (i.e., forward or backward geodesic spheres) coincide with level sets of $\sigma$. Therefore, we can take $u$ to be ($\pm$) the forward or backward distance function from $p$, $r = \dist_p$; i.e., $r = |u|$. This in particular means the forward and backward distances from $p$ coincide (at least near $p$). 
 	\smallskip
	\item \label{item:const-curv-3} The quantities 
	\begin{align*}
	\rho_u(0) := \lim_{r\downarrow 0} \rho_u(u), \quad \rho_{uu}(0) := \lim_{r\downarrow 0} \rho_{uu}(u), \quad \rho_{uuu}(0) := \lim_{r\downarrow 0} \rho_{uuu}(u), 
	\end{align*}
	exist, and we obtain
	\begin{align*}
	\rho_u(0) = 0, \quad \rho_{uu}(0) \neq 0, \quad \text{and} \quad \rho_{uuu}(0) = 0,
	\end{align*}
	Equivalently, the same statement holds for $\sigma$.
	\smallskip
	\item \label{item:const-curv-4} On a deleted neighborhood of a critical point $p$, the ideal metric $g_{\sf ideal}$ has constant positive flag curvature $\rho^2_{uu}(0)$ and the ideal hypersurface is diffeomorphic to $\Sp^{n-1}$. 
	\smallskip
	\item \label{item:const-curv-5} The critical points are non-degenerate; thus, they are local extrema of $\sigma$.
	\end{enumerate}
\section{Geometric linearization}\label{sec:geom-lin}
Recall in the Riemannian setting, the PDE describing a \cpt is independent of $\y$ and a \cpt, $\varphi$, is automatically conformal. The geometric characterization of local regular Riemannian \cpt\!s is as follows.
\begin{enumerate}
	\item \label{item:geom-char-1} The equation \hyperref[eq:Riem-pde]{\eqref{eq:Riem-pde}} admits a local regular solution if and only if locally the space is foliated by totally umbilic hypersurfaces whose normals are principal Ricci directions and integrate to geodesics~\cite{Yano2}
	\medskip
	\item \label{item:geom-char-2} The equation  \hyperref[eq:Riem-pde]{\eqref{eq:Riem-pde}} admits a local regular solution if and only if locally the metric has a warped product form with warping function $e^{-\sigma}\sigma_u$ with cross sections that are totally umbilical;~\cite{Br,Fia,Tash1}.
\end{enumerate}
\subsection{Geometric linearizations}
\par Recall for a unit vector field $V$, \emph{the Riemannian metric $g^V$ is called an \underline{osculating metric}}; see~\cite[Chapter 5]{Oh} and the references given there.  
\par In these notes, a geometric linearization of a Finslerian geometric PDE is referred to as finding osculating metrics that are Riemannian \cpt\!s. More precisely,
\begin{definition}
	An osculating Riemannian metric $g^V$ is said to be a local regular geometric linearization of the Finslerian equation \hyperref[eq:Fins-geom-pde]{\eqref{eq:Fins-geom-pde}} (or equivalently \hyperref[eq:Hess-pde]{\eqref{eq:Hess-pde}}), if it is a local regular Riemannian \cpt (i.e., a local regular solution of \hyperref[eq:Riem-pde]{\eqref{eq:Riem-pde}}). A global Riemannian \cpt\!, $e^{2\sigma}\mathdutchcal{g}$, is said to be global linearization of $g$ whenever, restricted to the regular domain of $\sigma$, $\mathdutchcal{g}$ is an osculating metric. 
\end{definition}
\begin{theorem}[Local regular linearizations of a Finslerian conformal \cpt]\label{thrm:Riem-linearization}
	Let $\widebar{F} = e^{\sigma} F$ be a non-trivial conformal \cpt\!. Then the following are equivalent
	\begin{enumerate}
		\item The conformal transformation $\widebar{g}^V = e^{2\sigma}g^V$ is a local regular geometric linearization of the Finslerian \cpt equation \hyperref[eq:Fins-geom-pde]{\eqref{eq:Fins-geom-pde}}.
		\medskip
		\item In the coordinate system $u^1= u, \cdots, u^n$ (described in \hyperref[sec:highlights]{\secref{sec:highlights}}), it holds
		\begin{align}\label{eq:lin-criterion}
			C_{\alpha \beta \eta}\; \partial_{u} V^\eta = C_{\alpha \beta r} \; \partial_{u} V^r = 0, \quad 1 \le \alpha,\beta \le n-1,
		\end{align} 
		(evaluated at $V$)
		or equivalently
		\begin{align*}
			^V\bbGamma^i_{j1}(x) = \overstar{\Gamma}^i_{j1}(V), \quad \forall i,j.
		\end{align*}
		or
		\begin{align*}
			{\overstar{\Gamma}}^1_{ij}(V) = {\Gamma}^1_{ij}(V) = {^{\mathdutchcal{g}}\Gamma}^1_{ij}, \quad \forall i,j. 
		\end{align*}
	\end{enumerate}	
	\par Moreover, for any such Riemannian \cpt\!, $g^V$, the principal curvature of $\Sigma$ w.r.t. $g$, $g^{\sf n}$ and $g^V$ all coincide and equal
	$\kappa = \nicefrac{f}{F(\grad \sigma)}$.
	\par Also note that any vector field, $V$, that is independent of $u$, satisfies the linearization criterion.
	\par In particular,	$e^{2\sigma}g^{\sf n}$ is (well-defined and) a geometric linearization on the regular domain of $\sigma$.
\end{theorem}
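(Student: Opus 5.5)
We work in the adapted coordinates $(u,u^\alpha)$ of \hyperref[sec:highlights]{\secref{sec:highlights}} on the regular domain. For the osculating metric $\mathdutchcal{g}=g^V$, we compare the Levi-Civita Christoffel symbols ${^{\mathdutchcal{g}}\Gamma}$ with the Barthel--Matsumoto symbols ${^V\bbGamma}$ and with $\overstar{\Gamma}(V)$, using the formulas of \hyperref[subsec:Y-calc]{\secref{subsec:Y-calc}}.

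The first step uses the Riemannian characterization (item 2 of the geometric characterization list). It says that $e^{2\sigma}g^V$ is a local regular Riemannian \cpt{} if and only if the Riemannian Hessian of $\sigma$ with respect to $g^V$ satisfies \hyperref[eq:Riem-pde]{\eqref{eq:Riem-pde}}, i.e.
\begin{align*}
{^{\mathdutchcal{g}}\nabla}^2\sigma - d\sigma^2 = \tilde f\,\mathdutchcal{g}.
\end{align*}
Since $\sigma=\sigma(u)$, we have ${^{\mathdutchcal{g}}\nabla}^2_{ij}\sigma=\partial_i\partial_j\sigma-{^{\mathdutchcal{g}}\Gamma}^1_{ij}\sigma_u$. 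In the same way, by \hyperref[eq:Hess-pde]{\eqref{eq:Hess-pde}} evaluated at $\y=V$, the Finslerian horizontal Hessian is $\partial_i\partial_j\sigma-\Gamma^1_{ij}(V)\sigma_u=\sigma_u^2\delta_{i1}\delta_{j1}+f g_{ij}(V)$. Because $\mathdutchcal{g}_{ij}=g_{ij}(V)$ and $\sigma_u\neq0$, the Riemannian \cpt{} condition, with $\tilde f=f$, holds if and only if ${^{\mathdutchcal{g}}\Gamma}^1_{ij}=\Gamma^1_{ij}(V)$. Conversely, if $\tilde f\neq f$ we reach a contradiction by comparing the $(1,1)$ entries, where both sides must equal $\sigma_{uu}-\sigma_u^2$ because $u$ is $g$-unit along normal geodesics by \hyperref[eq:metric-decomp]{\eqref{eq:metric-decomp}}. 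This gives the third form of (2). The second form follows because $\Gamma^1_{ij}$ and $\overstar{\Gamma}^1_{ij}$ differ only by Cartan-tensor terms with a lower index $1$, and $C_{1\cdot\cdot}$ vanishes thanks to the product decomposition \hyperref[eq:metric-decomp]{\eqref{eq:metric-decomp}}.

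The second step, which we expect to be the main obstacle, is to expand ${^{\mathdutchcal{g}}\Gamma}^1_{ij}-\overstar{\Gamma}^1_{ij}(V)$ using the chain-rule formula of \hyperref[subsec:Y-calc]{\secref{subsec:Y-calc}}. By \hyperref[eq:metric-decomp]{\eqref{eq:metric-decomp}} we have $g^{1l}=\delta^{1l}$ and $C_{1\cdot\cdot}=0$, so only the term $g^{11}C_{jk m}(G^m_1-\partial_1V^m)$ survives. We must then show $G^\eta_1(V)=0$, or at least that its contribution cancels. This should follow because ${\sf n}=\partial_u$ is a geodesic field and the metric is a warped product with the $u$-direction orthogonal, so the nonlinear connection coefficients $G^\eta_1$ reduce to $\tfrac{\rho_{uu}}{\rho_u}V^\eta$. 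The resulting term $C_{jk\eta}V^\eta$ then vanishes by homogeneity of the Cartan tensor. What remains is $C_{\alpha\beta\eta}\partial_uV^\eta$, and this equals the criterion \hyperref[eq:lin-criterion]{\eqref{eq:lin-criterion}}. The same computation with the formula for ${^V\bbGamma}^i_{j1}$ gives the equivalent statement ${^V\bbGamma}^i_{j1}=\overstar{\Gamma}^i_{j1}(V)$.

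Finally, the principal curvature of $\Sigma$ with respect to any metric in which ${\sf n}$ is the unit normal is computed from the Hessian of $u$ restricted to $T\Sigma$. From the identity in the first step this equals $\tfrac{f}{\sigma_u}\,g|_{T\Sigma}$, so $\kappa=\nicefrac{f}{F(\grad\sigma)}$ for $g$, $g^{\sf n}$ and $g^V$ alike. If $V$ is independent of $u$, then $\partial_uV=0$ and the criterion holds trivially. For $V={\sf n}=\partial_u$ we have $\partial_uV^\eta=0$ as well, and \hyperref[eq:warped]{\eqref{eq:warped}} shows that $g^{\sf n}$ is well-defined and smooth on the regular domain. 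Hence $e^{2\sigma}g^{\sf n}$ is a geometric linearization.
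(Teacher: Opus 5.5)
Your argument is correct, but it runs in the opposite direction from the paper's. The paper does not compare Hessians. It writes $g^V=du^2+(e^{-\sigma}\sigma_u)^2g_{\alpha\beta}(V)$ and applies the warped-product characterization of Riemannian \textsf{CPT}s. That characterization, not the defining PDE \eqref{eq:Riem-pde} you quote, is what item (2) of the list at the start of \secref{sec:geom-lin} says. It gives that $e^{2\sigma}g^V$ is a \textsf{CPT} iff $h_{\alpha\beta}=g_{\alpha\beta}(V)$ is independent of $u$. The chain rule then turns $\partial_u h_{\alpha\beta}$ into (a multiple of) $C_{\alpha\beta\kappa}\,\partial_uV^\kappa$, which gives \eqref{eq:lin-criterion} in one line, and the Christoffel-symbol forms are derived from the criterion afterwards.

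You instead subtract \eqref{eq:Hess-pde} at the reference vector $V$ from the Riemannian equation for $\mathdutchcal{g}=g^V$, and use the $(1,1)$ entry to force $\tilde f=f$. This makes ${^{\mathdutchcal{g}}\Gamma}^1_{ij}=\overstar{\Gamma}^1_{ij}(V)$ the primary equivalent statement, and only then does the $Y$-calculus comparison formula convert it into the Cartan-tensor criterion. The paper's route is shorter and needs no Christoffel comparison for the main equivalence. Yours stays at the level of the PDE and never invokes the Brinkmann--Fialkow--Tashiro characterization. Because it produces $\tilde f=f$, it also proves the principal-curvature claim $\kappa=f/\sigma_u$ along the way; the paper states that claim without argument.

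Your handling of the $G^\eta_1$ term also differs from the paper, which simply quotes $G^i_1\equiv0$ from \cite{FL1}. Computing the spray of $F^2=(y^1)^2+\rho_u^2F_{\sf ideal}^2$ directly gives $G^1_1=0$ and $G^\alpha_1=(\rho_{uu}/\rho_u)\,y^\alpha$. This vanishes only in the normal direction. So your homogeneity argument, $C_{ij\eta}V^\eta=C_{ijm}V^m=0$ using $C_{ij1}=0$, is what actually disposes of this term for a general $V$.

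One correction, though. In your first step you justify $\overstar{\Gamma}^1_{ij}(V)=\Gamma^1_{ij}(V)$ by saying the two differ only by Cartan-tensor terms carrying an index $1$. That is not so. The difference is $g^{1l}(C_{ijr}G^r_l-C_{ljr}G^r_i-C_{lir}G^r_j)=C_{ijr}G^r_1$, and this surviving term has no index $1$ on $C$. It vanishes only through the observation $G^r_1\propto V^r$ plus homogeneity, which you make in your second step, so cite that instead. In the paper's notation $\overstar{\Gamma}$ are the horizontal coefficients entering \eqref{eq:Hess-pde} and $\Gamma$ are the formal Christoffel symbols. With the corrected justification they agree in the $1$-row, so your step-one identity holds as written.
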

\begin{proof}[\small \textbf{Proof}]
	Based on the decomposition \hyperref[eq:metric-decomp]{\eqref{eq:metric-decomp}}, we have
	\begin{align}\label{eq:met-decomp-V}
		g_{\alpha\beta}^V &= du^2 + \left(e^{-\sigma}\sigma_u\right)^2 g_{\alpha\beta}\left(V^2, \cdots, V^n\right).
	\end{align}
	This readily implies the normal geodesics are still unit-speed geodesics w.r.t. $g^V$. By virtue of the geometric characterization as in \hyperref[item:geom-char-2]{item (\ref{item:geom-char-2})} in the beginning of this section, and given the decomposition \hyperref[eq:met-decomp-V]{\eqref{eq:met-decomp-V}}, $g^V$ is a local regular Riemannian \cpt, if and only if,
	\begin{align*}
		h_{\alpha\beta} := g_{\alpha\beta}\left(V^2, \cdots, V^n\right),
	\end{align*}
	is a constant metric on $\Sigma$, i.e., independent of $u$. 
	\par By the chain rule, we get
	\begin{align*}
		\partial_u h_{\alpha\beta} &= \partial_u g_{\alpha\beta}\left(V^2, \cdots, V^n\right)\\
		&= \dt{\partial}_\kappa g_{\alpha\beta} \; \partial_u V^\kappa \\
		&= C_{\alpha\beta \kappa} \; \partial_u V^\kappa;
	\end{align*}
	thus, $g^V$ is a local regular Riemannian \cpt if and only if 
	\begin{align}\label{eq:criterion-1}
		C_{\alpha\beta \kappa} \; \partial_u V^\kappa  \equiv 0.
	\end{align}
	\par Since $C_{1ij} \equiv 0$ (see~\cite[Theorem 3.4]{FL1}), we get
	\begin{align*}
	C^i_{jm}\; \partial_u V^m &= C^i_{j\gamma} \; \partial_u V^\gamma\\
		&= g^{il}C_{lj\gamma} \; \partial_u V^\gamma\\
	&= g^{i\eta}C_{\eta j\gamma}  \; \partial_u V^\gamma\\
	&= 0.
	\end{align*}
	Thus, noting $G^i_1 \equiv 0$ (established in~\cite[Theorem 3.4]{FL1}), \hyperref[eq:criterion-1]{\eqref{eq:criterion-1}} is equivalent to 
	\begin{align*}
		C^i_{jm}(V) \left(	G^m_1 + \partial_u V^m\right)(V) \equiv 0,
	\end{align*}
	which in turn is equivalent to 
	\begin{align*}
		^V\bbGamma^i_{j1}(x) = \overstar{\Gamma}^i_{j1}(V);
	\end{align*}
	see \hyperref[subsec:Y-calc]{\secref{subsec:Y-calc}}.
	\par On the other hand, by the standard relation,
	\begin{align*}
		\overstar{\Gamma}^k_{ij} - \Gamma^k_{ij} =  g^{kl}\left(C_{ijr}G^r_l - C_{ljr}G^r_i - C_{lir}G^r_j\right),
	\end{align*}
	(see~\cite[2.4.9]{BCS}), we get
	\begin{align*}
		\overstar{{\Gamma}}^1_{\alpha\beta} - {\Gamma}^1_{\alpha\beta} = {C}_{\alpha\beta r}{G}^r_1 = {C}_{\alpha\beta \gamma}{G}^\gamma_1 = 0.
	\end{align*}
	By direct calculations, one gets
	\begin{align*}
		{^{\mathdutchcal{g}}\Gamma}^i_{jk}(V) &= {\Gamma}^i_{jk}(V) \\
		&+ {g}^{il} \Big({C}_{lkm} \; \partial_{j} V^m  +  {C}_{jlm} \; \partial_{k} V^m  - {C}_{jkm} \; \partial_{l} V^m \Big)(V);
	\end{align*}
	see e.g., \cite[(4.1)]{Oh}.
	Thus,
	\begin{align*}
		&{^{\mathdutchcal{g}}\Gamma}^1_{\alpha\beta} - {\Gamma}^1_{\alpha\beta}(V) \\
		&=   \delta^{1l}  \Big({C}_{l\beta m} \; \partial_{\alpha} V^m +  {C}_{\alpha lm} \; \partial_{\beta} V^m - {C}_{\alpha \beta m} \; \partial_{l} V^m \Big)(V)\\
		&=  \Big({C}_{1\beta m} \; \partial_{\alpha} V^m +  {C}_{\alpha1m} \; \partial_{\beta} V^m - {C}_{\alpha \beta m} \; \partial_{u} V^m  \Big)(V)\\
		&= -  {C}_{\alpha \beta m}(V) \; \partial_{u} V^m(V) \\
		&= -  {C}_{\alpha \beta \gamma}(V)  \; \partial_{u} V^\gamma(V) \\
		&=0,
	\end{align*}
	where the last equality is by \hyperref[eq:criterion-1]{\eqref{eq:criterion-1}}. 
\par Therefore, one deduces that \hyperref[eq:criterion-1]{\eqref{eq:criterion-1}} is also equivalent to
	\begin{align*}
		{\overstar{\Gamma}}^1_{\alpha\beta}(V) = {\Gamma}^1_{\alpha\beta}(V) = {^{\mathdutchcal{g}}\Gamma}^1_{\alpha\beta}.
	\end{align*}
	We note that the other Christoffel symboles $\Gamma^1_{ij}$ or ${\overstar{\Gamma}}^1_{ij}$ vanish; see~\cite{FL1}.
\par To show that $e^{2\sigma}g^{\sf n}$ is (well-defined and) a geometric linearization on the regular domain of $\sigma$, we note that since the vector field ${\sf n}$ is constant in coordinates, this follows directly from the criterion \hyperref[eq:lin-criterion]{\eqref{eq:lin-criterion}} or alternatively from the combination of \hyperref[eq:warped]{\eqref{eq:warped}} and the geometric characterization, \hyperref[item:geom-char-2]{item (\ref{item:geom-char-2})} at the beginning of the current section.
\end{proof}
\par A global converse to the Riemannian linearization phenomenon is also available in special cases that include Berwaldian manifolds. 
\begin{theorem}[Non-linearization]\label{thrm:Fins-nonlinearization}
	Let $\widebar{F} = e^{\sigma} F$ be a non-trivial conformal transformation and suppose on the regular domain of $\sigma$, $F$ satisfies 
	\begin{align*}
		G^1(x,\y) \equiv {^{g^{\sf n}}G}^1(x,\y).
	\end{align*}
(In particular, if $(M,F)$ is Berwaldian). Also suppose $\bar{g}^{\sf n} = e^{2\sigma} g^{\sf n}$ is a local regular Riemannian \cpt\! that extends to a global metric $\mathdutchcal{g}$ such that $\widebar{\mathdutchcal{g}} := e^{2\sigma} \mathdutchcal{g}$ is a global Riemannian \cpt\!. 
	\par If either $\mathdutchcal{g}$ is complete or ${\sf Crit}$ is a nowhere dense subset, then 
	 $\widebar{F} = e^{\sigma} F$ is a Finslerian \cpt\!. 
\end{theorem}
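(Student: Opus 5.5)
Since $\bar{\mathdutchcal{g}} = e^{2\sigma}\mathdutchcal{g}$ is a global Riemannian \cpt\!, the function $\sigma$ solves \eqref{eq:Riem-pde} for $\mathdutchcal{g}$ on all of $M$, i.e.
\begin{align*}
{^{\mathdutchcal{g}}\nabla}^2\sigma - d\sigma^2 = \tilde f\, \mathdutchcal{g},
\end{align*}
for some scalar function $\tilde f$. The goal is to derive the Finslerian equation \eqref{eq:Hess-pde}, ${^{\sf horiz}\nabla}^2\sigma - d\sigma^2 = fg$, on all of $TM\smallsetminus 0$. By the Shen--Yang characterization this is equivalent to $\bar F$ being a \cpt\!. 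I will first verify \eqref{eq:Hess-pde} on the regular domain and then extend it across ${\sf Crit}$ by continuity.

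On the regular domain I work in the coordinates $(u,u^\alpha)$ of \secref{sec:highlights}. There $\sigma=\sigma(u)$, so $d\sigma = \sigma_u\,du$. The Finslerian horizontal Hessian is
\begin{align*}
\sigma_{i|j} = \partial_i\partial_j\sigma - \Gamma^1_{ij}(\y)\,\sigma_u ,
\end{align*}
and only the $\Gamma^1_{ij}$ enter. The hypothesis $G^1(x,\y) = {^{g^{\sf n}}G}^1(x,\y)$ is, after two vertical differentiations, the identity $\Gamma^1_{ij}(\y) = {^{g^{\sf n}}\Gamma}^1_{ij}$ for the Berwald-type coefficients. From Theorem~\ref{thrm:Riem-linearization}, with $V={\sf n}$ (which satisfies the criterion since it is constant in $u$), we also have ${\overstar{\Gamma}}^1_{ij}(\sf n) = \Gamma^1_{ij}(\sf n)={^{g^{\sf n}}\Gamma}^1_{ij}$. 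I would then use $C_{1ij}=0$ and $G^\gamma_1=0$ from \cite[Theorem 3.4]{FL1} to pass from the Berwald to the Cartan/Chern horizontal coefficients: the correction terms are $g^{1l}C_{\cdot\cdot r}G^r_l$-type terms, which vanish.

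With this, $\sigma_{i|j}(\y)$ coincides with the $g^{\sf n}$-Hessian for every $\y$. On the regular domain $\mathdutchcal{g}=g^{\sf n}$, so the Riemannian equation gives $\sigma_{i|j} - \sigma_i\sigma_j = \tilde f\, g^{\sf n}_{ij}$. Because of the warped form \eqref{eq:warped}, both sides have the structure
\begin{align*}
(\sigma_{uu}-\sigma_u^2)\,du^2 \;+\; (\text{umbilic factor})\cdot(\rho_u)^2 g_{\sf ideal}.
\end{align*}
Using \eqref{eq:metric-decomp}, $g(x,\y)$ has the same $du^2$ part and tangential part $(\rho_u)^2 g_{\sf ideal}(\vecv)$. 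I must still check that the tangential block $-\Gamma^1_{\alpha\beta}(\y)\sigma_u$ is proportional to $g_{\alpha\beta}(x,\y)$ with the same factor $\tilde f=f$. The principal curvature statement in Theorem~\ref{thrm:Riem-linearization} ($\kappa=f/F(\grad\sigma)$, common to $g$ and $g^{\sf n}$) together with the hypothesis on $G^1$ should give exactly this. Hence \eqref{eq:Hess-pde} holds on the regular domain.

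\textbf{Extension across ${\sf Crit}$.} If ${\sf Crit}$ is nowhere dense, both sides of \eqref{eq:Hess-pde} are continuous on $TM\smallsetminus0$ and agree on a dense set. Here $f$ extends continuously as $\tilde f$, which is globally defined and smooth. So the identity holds everywhere. If instead $\mathdutchcal{g}$ is complete, the Riemannian classification recalled in the introduction (Tashiro, Kanai, K\"uhnel) shows that the critical set of $\sigma$ has at most two points. In particular it is nowhere dense, and we are reduced to the previous case.

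\textbf{Main obstacle.} I expect the hardest step to be identifying the Finslerian horizontal Hessian with the Riemannian one for \emph{all} $\y$, not just $\y={\sf n}$. This requires converting the $G^1$ hypothesis into the statement that the Chern/Cartan $\Gamma^1_{\alpha\beta}(\y)$ is proportional to $g_{\alpha\beta}(\y)$. I would attack it by differentiating $G^1=\tfrac12\,{^{g^{\sf n}}\Gamma}^1_{\alpha\beta}y^\alpha y^\beta$ twice in $\y$. I would then compare with the warped formula ${^{g^{\sf n}}\Gamma}^1_{\alpha\beta} = -\rho_u\rho_{uu}\,\mathdutchcal{g}_{{\sf ideal},\alpha\beta}$, while tracking Cartan-tensor corrections via $C_{1ij}=0$.
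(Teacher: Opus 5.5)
Your argument is circular at its core. The theorem assumes only that $\widebar{F}=e^{\sigma}F$ is a conformal transformation; that it is a \cpt is the conclusion.

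On the regular domain, however, you rely on several results that presuppose this conclusion:
the decomposition \eqref{eq:metric-decomp} of the Finslerian metric $g(x,\y)$; the identities $C_{1ij}=0$ and $G^{\gamma}_1=0$ from \cite[Theorem 3.4]{FL1}; and Theorem~\ref{thrm:Riem-linearization} together with its principal-curvature statement. All of these are structure results for a space on which $e^{\sigma}F$ already \emph{is} a nontrivial conformal \cpt. Theorem~\ref{thrm:Riem-linearization} assumes this in its first line, and the function $f$ it refers to is the \cpt function you are trying to produce. The only thing legitimately available is the warped form of $g^{\sf n}$ itself, which follows from the Riemannian \cpt hypothesis. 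So both your conversion from Berwald to Chern/Cartan coefficients and your matching of the tangential block with $f\,g_{\alpha\beta}(x,\y)$ use what is to be proved. The matching step is also never actually carried out; it is left at ``should give exactly this''. Separately, Berwald and Chern coefficients differ by the Landsberg tensor, not by the $C\cdot G$ terms you cite; those terms relate the Chern coefficients to $\overstar{\Gamma}$.

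The paper needs none of this. It uses only the formula $\Hess\sigma(\y)=y^iy^j\partial_i\partial_j\sigma-2G^i(\y)\sigma_i$, which holds for $F$ and for $g^{\sf n}$ alike, together with $\sigma_i=\sigma_u\delta^1_i$ in coordinates adapted to the level sets. This gives
\begin{align*}
{^{g^{\sf n}}\nabla}^2\sigma(\y)-\Hess_F\sigma(\y)=2\big(G^1-{^{g^{\sf n}}G}^1\big)(\y)\,\sigma_u=0
\end{align*}
for every $\y$. This is exactly your ``main obstacle'', and it is immediate at the level of the spray. No Christoffel-symbol or Cartan-tensor bookkeeping is required, because the \cpt equation in the form \eqref{eq:Fins-cpt-hess} involves only this Hessian. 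Your own identity, obtained by differentiating $G^1={^{g^{\sf n}}G}^1$ twice vertically, already says the same thing, so you could have stopped there.

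The paper then reads \eqref{eq:Riem-pde} for $\mathdutchcal{g}=g^{\sf n}$ as the Finslerian equation on the regular domain. It extends across ${\sf Crit}$ by density and continuity, exactly as in your final step, which is fine. Be aware that the paper passes from equal Hessians to equal equations without separately comparing $\tilde f\,\mathdutchcal{g}(\y,\y)$ with $fF^2(\y)$. If you want to justify that matching, it has to come from the stated hypotheses on $G^1$ and $g^{\sf n}$ alone, not from \eqref{eq:metric-decomp}.
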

\begin{proof}[\small \textbf{Proof}]
	At the regular points of $\sigma$, one can define two Hessians. One is the Finslerian Hessian $\Hess_F(\sigma)$ and the other is the Hessian w.r.t. $g^{\sf n}$ i.e., ${^{g^{\sf n}}\nabla}^2(\sigma)$. (Note that $g^{\sf n}$ is nothing but $g^{\grad \sigma}$.)
	\par Using the local formula for the Hessian, straightforward calculation shows that
	\begin{align}\label{eq:hess-coincide}
		{^{g^{\sf n}}\nabla}^2(\sigma) (\y) - \Hess_F \sigma (\y) &= 2\Big( G^i(y) - {^{g^{\sf n}}G}^i(y) \Big) \sigma_i\\
		&= 2\Big( G^1(y) - {^{g^{\sf n}}G}^1(y) \Big) \sigma_u \notag\\
		&= 0.\notag
	\end{align}
	\par From \hyperref[eq:hess-coincide]{\eqref{eq:hess-coincide}} in combination with \hyperref[eq:Fins-cpt-hess]{\eqref{eq:Fins-cpt-hess}}, we infer that the Riemannian PDE \hyperref[eq:Riem-pde]{\eqref{eq:Riem-pde}} coincides with the Finslerian $\cpt$ PDE \hyperref[eq:Fins-cpt-hess]{\eqref{eq:Fins-cpt-hess}} on the regular domain of $\sigma$. Now if $\mathdutchcal{g}$ is complete, one deduces that ${\sf Crit}$ is discrete (indeed by the Riemannian theory, ${\sf crit}$ cannot have more than two points) so in any case, the regular domain is dense thus by continuity, the PDE \hyperref[eq:Fins-cpt-hess]{\eqref{eq:Fins-cpt-hess}} is satisfied globally. 
	Therefore, we have concluded that $\widebar{F} = e^{\sigma} F$ is a conformal Finslerian \cpt. 
	\par Now we show that the theorem can be used in the setting of Berwaldian manifolds. 
	\par {\small \bf Claim.} When $F$ is Berwaldian, the two Hessians coincide.  
	\par {\small \bf Proof of the claim:}
	We note that the difference of Hessians
	\begin{align*}
		{^{g^{\sf n}}\nabla}^2(\sigma) (\y) - \Hess_F \sigma (\y) = 2\Big( G^i(y) - {^{g^{\sf n}}G}^i(y) \Big) \sigma_i,
	\end{align*}
	also coincides with ${\bf T}_{\grad \sigma} (y)$ where ${\bf T}_v(w)$ is the so-called ${\bf T}$-curvature; see~\cite[Lemma 14.1.1]{Shen-Lectures}. 
	\par The claim follows by utilizing the fact that the underlying space being Berwaldian is equivalent to ${\bf T}$ curvature vanishing identically. Indeed, ${\bf T}\equiv 0$ is equivalent to ${\bf P} \equiv 0$ where $\bf P$ is the $\bf P$-curvature of the Chern connection, and this in turn is equivalent to metric being Berwaldian; e.g., see~\cite[Propositions 10.1.1 and 7.2.2]{Shen-Lectures}. 
	\scalebox{0.6}{$\blacksquare$}
\end{proof}
\begin{remark}
	The ${\bf T}$-curvature satisfies ${\bf T}_{\y}(\y) = 0$ thus, in general, we always have
	\begin{align*}
	{^{g^{\sf n}}\nabla}^2(\sigma) ({\sf n}) = \Hess_F (\sigma) ({\sf n});
	\end{align*}
	see~\cite{Shen-Lectures}. 
\end{remark}
\section{Diffeomorphic classification and curvature}
\begin{lemma}\label{lemma:crit}
	The following hold true. 
	\begin{enumerate}
		\item \label{item:level-diff-1} $\sigma$ has at most two critical points, and each critical point (if any) is either an absolute maximum or absolute minimum point for $\sigma$. 
		\medskip
		\item \label{item:level-diff-2}	$M \smallsetminus \mathsf{Crit}$ is diffeomorphic to $I \times \Sigma$ for an open interval $I$. When $\mathsf{Crit} = \varnothing$, one obtains $M \cong \R \times \Sigma$.  
	\end{enumerate}
\end{lemma}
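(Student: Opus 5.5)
The plan is to reduce everything to the one–dimensional behaviour of $\sigma$ (equivalently $\rho=e^{-\sigma}$) along the normal geodesics, i.e.\ the integral curves of ${\sf n}=\nicefrac{\grad\sigma}{\|\grad\sigma\|}$. By \ref{item:highlight-1}, along each such unit-speed geodesic $\rho$ solves the linear ODE $\rho_{uu}=-f\rho$. Moreover, $f$ is constant on the regular level sets of $\sigma$, since the warped decomposition \eqref{eq:metric-decomp} makes every quantity built from $\rho_u$ a function of $u$ alone. Thus on the regular domain $\rho$, $f$ and $F(\grad\sigma)$ are functions of the single variable $u$.

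\textbf{Step 1: the normal flow.} I would take a normal geodesic $\gamma$ starting at a regular point and extend it maximally in the forward (or backward) direction, using the completeness hypothesis. Along $\gamma$ one has $\sigma_u=F(\grad\sigma)>0$ as long as $\gamma$ stays in the regular domain, so $\sigma$ is strictly increasing along $\gamma$. By \ref{item:const-curv-2} and \ref{item:const-curv-3}, if $\gamma$ reaches a critical point $p$, it does so as a radial geodesic of $p$. Since $\rho_{uu}(0)\neq0$ and $\rho_u(0)=0$, the function $\rho$ has a strict extremum at $p$. By \ref{item:const-curv-4}, the level sets near $p$ are spheres $\Sp^{n-1}$, the geodesic spheres about $p$. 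Using the flow of $\grad\sigma$ (which maps regular level sets diffeomorphically onto each other away from ${\sf Crit}$), every regular level set between two critical values is diffeomorphic to the level sets near $p$. Hence all normal geodesics from that sphere reach $p$ in the same $u$-time, so $p$ is the unique point where $\sigma$ attains the value $\sigma(p)$, approached from one side.

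\textbf{Step 2: at most two critical points, and they are absolute extrema.} I would consider $\sigma$ along the one-parameter family of level sets indexed by $u\in I$. Here $I$ is the maximal interval of regular values obtained by flowing a fixed regular level set $\Sigma$ in both directions. The set $\sigma^{-1}(\sigma(I))$ is open. By Step 1, at each finite end of $I$ the flow either closes up at a single critical point (a max at the upper end, a min at the lower end), or the end corresponds to $u\to\pm\infty$ by completeness. Connectedness of $M$ then gives that $M$ equals the union of the flowed region and these endpoints, since this union is both open and closed. Closedness uses the isolatedness \ref{item:const-curv-1} together with completeness, which guarantees that normal geodesics do not escape in finite time. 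Consequently there are at most two critical points, one at each end of $I$. Since $\sigma$ is monotone along each normal line, a critical point at the top end is an absolute maximum, and one at the bottom end is an absolute minimum. This yields item (\ref{item:level-diff-1}).

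\textbf{Step 3: the product structure.} The same flow map $(u,x)\mapsto \Phi_u(x)$, with $x\in\Sigma$ and $u\in I$, gives a diffeomorphism $I\times\Sigma\to M\smallsetminus{\sf Crit}$. It is injective because $\sigma$ is strictly monotone in $u$, and surjective by Step 2. If ${\sf Crit}=\varnothing$, completeness forces $I=\R$: a finite end would be reached by a normal geodesic in finite time, and its endpoint would then have to be a critical point. Hence $M\cong\R\times\Sigma$, which gives item (\ref{item:level-diff-2}).

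\textbf{Main obstacle.} The delicate point is the closedness and completeness argument in Step 2 under merely \emph{forward or backward} completeness. Normal geodesics are only guaranteed to extend in one time direction, so one must show that the flowed region also exhausts $M$ in the other direction. I would handle this by using the reversed normal geodesics, i.e.\ integral curves of $-{\sf n}$. These are geodesics of the reverse metric, and by the decomposition \eqref{eq:metric-decomp} (where $du^2$ enters symmetrically) they are again unit-speed geodesics of $F$ traversed backward. Alternatively, the coincidence of forward and backward distances near critical points from \ref{item:const-curv-2} could be used for the same purpose. I expect this to be the main technical step to verify.
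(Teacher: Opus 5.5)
Your proposal is correct and follows essentially the same route as the paper. Both extend the product structure maximally along the normal flow, close off finite ends by the non-degenerate critical points with spherical level sets from (Crit2)--(Crit5), use connectedness of $M$ to rule out further critical points, and invoke reversibility of normal geodesics to obtain the splitting under mere forward or backward completeness. Your open--closed formulation and your monotonicity argument for the absolute-extremum claim are more explicit than the paper's case analysis on $(a_\infty,b_\infty)$ and its appeal to ``standard analysis facts,'' but this is a difference of presentation, not of method.
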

\begin{proof}[\small \textbf{Proof}]
	\hfill
	\begin{enumerate}
		\item []{\bf{\footnotesize \textsf{Proof of \hyperref[item:level-diff-1]{(\ref{item:level-diff-1})}}}}
		\par Let $q \in M \smallsetminus {\sf Crit}$. Take a small open neighborhood of $q$ of the form
		\begin{align*}
			\mathcal{U}_q = (\sigma(q)-\epsilon, \sigma(q) + \epsilon) \times \Uplambda_q,
		\end{align*}
		where $\Uplambda_q \subset \sigma^{-1}(q)$ is an open neighborhood of $q$. 
		Note that for sufficiently small $\eps >0$, $\mathcal{U}_q$ is sans critical points. 
		\par In the open neighborhood $\mathcal{U}_q$, we have the decomposition
		\begin{align*}
			g = du^2 + (\rho')^2 g_{\sf ideal}.
		\end{align*}
		This decomposition can be extended on the entire $(\sigma(q)-\epsilon, \sigma(q) + \epsilon) \times \Sigma_q$; see~\hyperref[sec:highlights]{\secref{sec:highlights}}. So w.l.o.g., we can take
		\begin{align*}
			\mathcal{U}_q = (\sigma(q)-\epsilon, \sigma(q) + \epsilon) \times \Sigma_q.
		\end{align*}
		\par Let
		\begin{align*}
			\mathcal{A} := \left\{ (s_1,s_2) \; \text{\textbrokenbar} \; s_1 < s_2 \quad \text{and \hyperref[eq:metric-decomp]{\eqref{eq:metric-decomp}} holds on}\; (s_1,s_2) \times \Sigma_q\right\} \neq \varnothing,
		\end{align*}
		and define
		\begin{align*}
			a_{\infty} := \inf \proj_1(\mathcal{A}), \quad b_{\infty} := \sup \proj_2(\mathcal{A}). 
		\end{align*}
		\par Let us discuss the possible cases.
		\begin{enumerate}
			\medskip
			\item \label{item:case-a} $-\infty < a_\infty < b_{\infty} < \infty$. 
			In this case, the normal geodesic $\gamma$ through $q$ has a limit point $q_1$ with $\sigma(q_1) = a_\infty$ and a limit point $q_2$ with $\sigma(q_2) = b_\infty$. These two points must be critical; otherwise, by the preceding argument, the metric decomposition can be extended past $a_\infty$ and $b_{\infty}$ and that is a contradiction. 
			\par It follows from \hyperref[sec:highlights]{\secref{sec:highlights}} - \hyperref[item:const-curv-2]{\ref{item:const-curv-2}}  that the level hyperspaces $\Sigma_s = \sigma^{-1}(s)$ are diffeomorphic to $\Sp^{n-1}$ for $s\in (a_\infty , b_{\infty})$; see~\cite[the proof of Lemma 5.1]{FL2}.
			\par By \hyperref[sec:highlights]{\secref{sec:highlights}} - \hyperref[item:const-curv-5]{\ref{item:const-curv-5}}, the points $q_i$ are local extrema for $\sigma$; in particular, they are elliptic points. This means adding the points $q_1$ and $q_2$ to $(a_\infty, b_\infty) \times \Sigma_q$, we obtain a smooth compact manifold $M'$.
			\par Now we note that, no other critical point can exist; indeed, since critical points are non-degenerate, a third critical point must be included in $(a_\infty, b_\infty) \times \Sigma_q$; otherwise, either $B_\eps (q_1) \smallsetminus \{q_1\}$ or $B_\eps (q_2) \smallsetminus \{q_2\}$ would be disconnected for small values of $\epsilon$ which is a contradiction. Thus, we conclude $M' = M$.
			\medskip  
			\item \label{item:case-b} $ a_\infty = -\infty$ and $  b_{\infty} < \infty$. 
			In this case, adding $q_2$ to $(a_\infty, b_\infty) \times \Sigma_q$, we get a (forward or backward) complete Finslerian manifold $M'$. There exists no other critical point by a similar connectivity argument. Thus $M'=M$.  In particular, all regular level hypersurfaces are diffeomorphic.
			\medskip 
			\item\label{item:case-c}  $ a_\infty > -\infty$ and $  b_{\infty} = \infty$.
			This case is similar to the \hyperref[item:case-b]{item (b)} above (with obvious necessary modifications).
			\medskip
			\item \label{item:case-d} $a = -\infty$ and $b = +\infty$. In this case, clearly, there exist no critical points. The flow of $\grad \sigma$ provides diffeomorphisms between all level hypersurfaces. 
		\par \emph{Now since the critical points are nondegenrate and there exist at most two of them, standard analysis facts about complete domains, yield the critical points are indeed absolute extrema.} 
		\end{enumerate}
		\medskip
		\item []{\bf{\footnotesize \textsf{Proof of \hyperref[item:level-diff-2]{(\ref{item:level-diff-2})}}}}
	\par By \hyperref[item:level-diff-1]{item (\ref{item:level-diff-1})} above, there are at most two critical points that are non-degenerate global extrema. The integral flow curves of $\grad\sigma$ coincide with normal geodesics, which are reversible by~\cite[Theorem 3.4]{FL1}. Using (forward or backward) completeness and using the Finslerian Hopf-Rinow theorem (see e.g., \cite[Theorem 3.21]{Oh}), the vector field $\grad \sigma$ is a complete vector field. Consequently, on $M \smallsetminus {\sf Crit}$, the flow of this vector field provides the diffeomorphic splitting.  	
		\end{enumerate}		
\end{proof}
\par \dunderline{1.2pt}{\small \textit{\textbf{\textsf{Notation:}}}}
\begin{itemize}
	\item Below, $X,Y,Z$ denote vector fields that are tangent to the level hypersurfaces $\Sigma$.
	\smallskip
	\item In the $u^i$ coordinates, we set $\y = (y^1, \vecv)$ where $\vecv = (y^2, \cdots, y^n)$ is also identified with $(0,y^2,\cdots,y^n)$.
\end{itemize}
\begin{lemma}[Induced Ricci and scalar curvatures on $\Sigma$]\label{lem:curvature}
	Setting
	\begin{align*}
	\mathcal{L}:= \left(\nicefrac{\rho_{uu}}{\rho_u}\right)^2, \quad \text{and} \quad \K^{\perp} = - \nicefrac{\rho_{uuu}}{\rho_u} = \left(f -\; \nicefrac{df(\grad\sigma)}{\|\grad\sigma\|^2}\right),
	\end{align*}
	(recall $\K^{\perp}$ is the normal flag curvature; see~\ref{sec:highlights}), the following identities hold for Ricci and scalar curvatures (as long as $\vecv \neq 0$).
	\begin{enumerate}
		\medskip
		\item \label{item:ric-curv-1}
	\hspace{-56pt}
	\makebox[\linewidth]{\(\begin{aligned}[t]
				{\Ric}^{\y}(X,Y) &= {^{\sf level}\Ric}^{\!\vecv}(X,Y) -\Big( (n-2) \mathcal{L}   - \K^{\perp}   \Big) \left< X,Y \right>_{\!\vecv}\\
			&= {^{\sf ideal}\Ric}^{\!\vecv}(X,Y) - \Big( (n-2) \rho^2_{uu}   - \rho_u\rho_{uuu}   \Big)\left< X,Y \right>^{\sf ideal}_{\!\vecv}.
		\end{aligned}\)} 
	\medskip
		\item \label{item:ric-curv-2}
		$
		{\Ric}^{\y}({\sf n},{\sf n}) = (n-1)\K^{\perp}
		$.
		\medskip
		\item \label{item:ric-curv-3}
		$
		{\Ric}^{\y}(X,{\sf n}) = 0
		$.
		\medskip
		\item \label{item:ric-curv-4}
		\hspace{-82pt}
		\makebox[\linewidth]{\(\begin{aligned}[t]
				\mathcal{R}ic(\y) &= {^{\sf level}\mathcal{R}ic}(\vecv) - \left( (n-2)\mathcal{L} - \K^{\perp} \right)F^2(\vecv)\\
				&= {^{\sf ideal}\mathcal{R}ic}(\vecv) - \Big( (n-2) \rho^2_{uu}   - \rho_u\rho_{uuu}   \Big)F_{\sf ideal}^2(\vecv).
			\end{aligned}\)} 
		\medskip
		\item \label{item:ric-curv-5}
		Items \hyperref[item:ric-curv-1]{(\ref{item:ric-curv-1})} -- \hyperref[item:ric-curv-3]{(\ref{item:ric-curv-3})} hold verbatim with $\mathcal{R}ic_{\sf tensor}$ in place of $\Ric$. 
		\medskip
			\item \label{item:ric-curv-6}
			For the scalar curvature, for $\vecv \neq 0$, it holds
			\begin{align*}
			\scal(\y) &= 2(n-1)\K^{\perp} + {\scal}_{{\sf level}}(\vecv) - (n-1)(n-2)\; \mathcal{L},
			\end{align*}
			and ${\scal}_{\sf level}(\vecv) = \rho^{-2}_u \; {\scal}_{\sf ideal}(\vecv)$.
			\par In other terms, the normalized Cartan scalar curvature satisfies
			\begin{align}\label{eq:scalar-normalized}
				\mathsf{s} &= \nicefrac{2}{n}\; \K^{\perp} + \nicefrac{(n-2)}{n} \; \mathsf{s}_{\sf level} \, - \, \nicefrac{(n-2)}{n} \;\mathcal{L},
			\end{align}
			and ${\mathsf{s}}_{\sf level} = \rho_u^{-2} \; {\mathsf{s}}_{\sf ideal}$.
			\item \label{item:ric-curv-7}
			The same scalar curvature relations hold with $\mathdutchcal{s}cal$ in place of $\scal$ and $\mathdutchcal{s}$ in place of $\mathsf{s}$.
\end{enumerate}
\end{lemma}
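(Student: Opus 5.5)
The plan is to compute everything by taking traces of the $hh$-curvature identities in \ref{item:highlight-4}, using a $g^{\y}$-orthonormal frame adapted to the splitting $T_xM = \mathrm{span}\{{\sf n}\}\oplus T_x\Sigma$. By the decomposition \eqref{eq:metric-decomp}, $g^{\y}$ restricted to $T\Sigma$ is $g_{\sf level}^{\vecv} = \rho_u^2\, g_{\sf ideal}^{\vecv}$, and ${\sf n}=\partial_u$ is $g^{\y}$-unit and $g^{\y}$-orthogonal to $T\Sigma$. So I will pick a frame $\{{\sf n}, e_2,\dots,e_n\}$ with $\{e_\alpha\}$ orthonormal for $g_{\sf level}^{\vecv}$. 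Here $\vecv\neq 0$ is needed so that $g_{\sf level}^{\vecv}$ makes sense. By \ref{item:highlight-6} nothing depends on $y^1$.

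\textbf{Ricci.} I will use the convention $\Ric(Y,Z) = \tr\big(X\mapsto \Riem(X,Y)Z\big)$ and trace over this frame.
\begin{itemize}
\item For (\ref{item:ric-curv-1}): the $e_\alpha$ part of $\Riem^{\y}(e_\alpha,X)Y$ comes from item \ref{item:curv-1}. It contributes ${^{\sf ideal}\Ric}^{\vecv}(X,Y) - (n-2)\mathcal{L}\langle X,Y\rangle_{\vecv}$, because the trace of $\langle X,Y\rangle e_\alpha - \langle e_\alpha,Y\rangle X$ over $n-1$ tangential directions is $(n-2)\langle X,Y\rangle$. The ${\sf n}$ part is $g^{\y}(\Riem({\sf n},X)Y,{\sf n})$. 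Using antisymmetry in the first two slots and item \ref{item:curv-4}, this equals $+\K^{\perp}\langle X,Y\rangle$.
\item The level and ideal Ricci tensors agree by item (a) of \ref{item:highlight-4}. Rewriting $\mathcal{L}\langle\cdot,\cdot\rangle_{\vecv} = \rho_{uu}^2\langle\cdot,\cdot\rangle^{\sf ideal}$ and $\K^{\perp}\langle\cdot,\cdot\rangle_{\vecv} = -\rho_u\rho_{uuu}\langle\cdot,\cdot\rangle^{\sf ideal}$ gives the second line.
\item For (\ref{item:ric-curv-2}), I trace item \ref{item:curv-3} over the $n-1$ tangential directions; the ${\sf n}$ term is zero.
\item For (\ref{item:ric-curv-3}), item \ref{item:curv-2} kills the tangential terms. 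The remaining term is a multiple of $\langle \Riem({\sf n},X){\sf n},{\sf n}\rangle$, which vanishes by items \ref{item:curv-3}--\ref{item:curv-4}.
\end{itemize}

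\textbf{Akbarzadeh Ricci.} For (\ref{item:ric-curv-4}) I write $\mathcal{R}ic(\y) = \Ric^{\y}(\y,\y)$ with $\y = y^1{\sf n}+\vecv$ and expand bilinearly using (\ref{item:ric-curv-1})--(\ref{item:ric-curv-3}). This gives
\begin{align*}
(y^1)^2(n-1)\K^{\perp} + {^{\sf level}\Ric}^{\vecv}(\vecv,\vecv) - \big((n-2)\mathcal{L}-\K^{\perp}\big)F^2(\vecv).
\end{align*}
This exposes a mismatch with the stated formula, which has no $(n-1)(y^1)^2\K^{\perp}$ term. I would have to check whether the statement tacitly means $\y=\vecv$ tangent, i.e.\ $y^1=0$; that is what the notation $\mathcal{R}ic(\y)$ versus $\vecv$ suggests, and I would adopt that reading. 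In any case, ${^{\sf level}\Ric}^{\vecv}(\vecv,\vecv) = {^{\sf level}\mathcal{R}ic}(\vecv)$ by the identity $\mathcal{R}ic = \Ric(\y,\y)$ applied on $\Sigma$. This uses \ref{item:highlight-3}: the induced connection is the Cartan connection of the induced structure.

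\textbf{Chern versions.} For (\ref{item:ric-curv-5}) I use \ref{item:highlight-5}: the same identities hold for $\mathcal{R}iem$, and the Jacobi tensor $\mathbf{R}$ agrees for both connections. The plan is to apply $\tfrac12\dt{\partial}\dt{\partial}$ to (\ref{item:ric-curv-4}), or equivalently to run the trace argument on $\mathbf{R}$. The main obstacle is the derivatives in $y^1$. Since all terms are $y^1$-independent apart from the explicit $F^2$, the Hessian in $\y$ splits block-diagonally into a ${\sf n}{\sf n}$ block, a tangential block, and zero mixed terms. This yields the analogues of (\ref{item:ric-curv-1})--(\ref{item:ric-curv-3}).

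\textbf{Scalar curvature.} For (\ref{item:ric-curv-6}) I take the trace of $\Ric$ with respect to $g^{\y}$. This gives $(n-1)\K^{\perp}$ from ${\sf n}$, plus $\scal_{\sf level} - (n-1)(n-2)\mathcal{L} + (n-1)\K^{\perp}$ from the tangential block. The scaling $\scal_{\sf level} = \rho_u^{-2}\scal_{\sf ideal}$ holds because $g_{\sf level}=\rho_u^2 g_{\sf ideal}$ with $\rho_u$ constant on $\Sigma$, and the Ricci tensor is invariant under constant rescaling. Dividing by $n(n-1)$ and using $\scal_{\sf level} = (n-1)(n-2)\,\mathsf{s}_{\sf level}$ gives \eqref{eq:scalar-normalized}. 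Item (\ref{item:ric-curv-7}) follows by the same trace applied to $\mathcal{R}ic_{\sf tensor}$, using (\ref{item:ric-curv-5}).
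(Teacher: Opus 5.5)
Your proposal is correct and is essentially the paper's own argument. You trace the $hh$-curvature identities of (Reg4) over an adapted $g^{\y}$-orthonormal frame $\{{\sf n},e_2,\dots,e_n\}$, obtain the full expression for $\mathcal{R}ic(\y)$ including the extra term $(n-1)\K^{\perp}(y^1)^2$, apply $\tfrac12\dt{\partial}\dt{\partial}$ for the tensor version, and trace for the scalar curvatures. The paper derives exactly this full expression in \eqref{eq:scalar-Ric-relation}, by a direct index computation rather than your bilinear expansion of $\Ric^{\y}(\y,\y)$, and then sets $y^1=0$, just as you guessed.

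One correction: your conversions $\mathcal{L}\,g_{\sf level}=\rho_{uu}^2\,g_{\sf ideal}$ and $\K^{\perp}g_{\sf level}=-\rho_u\rho_{uuu}\,g_{\sf ideal}$ are right, but they yield $-\big((n-2)\rho_{uu}^2+\rho_u\rho_{uuu}\big)$, not the printed $-\big((n-2)\rho_{uu}^2-\rho_u\rho_{uuu}\big)$. So the second lines of items (1) and (4) carry a sign typo (the paper's proof never writes that form out), and you should flag it rather than claim to reproduce it.
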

\begin{proof}[\small \textbf{Proof}]
	\hfill
	\begin{enumerate}
	\item	[]{\bf {\footnotesize \textsf{Proof of \hyperref[item:ric-curv-1]{(\ref{item:ric-curv-1})}}}.}
	\par Let 
	\begin{align*}
		\left\{{\sf n}, E_2(\y), \cdots, E_{n}(\y)\right\},
	\end{align*}
be a $\y$-dependent $g$-orthonormal frame. Clearly, $E_i$ must be tangent to $\Sigma$; thus, we deduce that $\left\{E_i({\y})\right\}$ is a $g_{\sf level}$-orthonormal frame as well. Consequently, utilizing \hyperref[sec:highlights]{\secref{sec:highlights}} - \hyperref[item:curv-1]{\ref{item:curv-1}}, we obtain
	\begin{align*}
		{\Ric}^{\y}(X,Y) &= \left<{\Riem}(X,E_i) E_i, Y\right> + \left<{\Riem}(X,{\sf n}) {\sf n}, Y\right>\\
		&= {^{\sf level}\Ric}(X,Y) -  \left(\nicefrac{\rho_{uu}}{\rho_u}\right)^2 \Big( (n-1)\left< X,Y \right> - \left< X, E_i \right>\left< Y, E_i  \right>   \Big)\\
		&+ \left<{\Riem}(X,{\sf n}) {\sf n}, Y\right>\\
		&= {^{\sf level}\Ric}^{\vecv}(X,Y) -\Big( (n-2) \left(\nicefrac{\rho_{uu}}{\rho_u}\right)^2   - \K^{\perp}   \Big) \left< X,Y \right>_{\!\vecv}\\
		&= {^{\sf ideal}\Ric}(X,Y) -\Big( (n-2) \mathcal{L}   - \K^{\perp}   \Big) \left< X,Y \right>_{\!\vecv},
	\end{align*}
	where we have utilized \hyperref[sec:highlights]{\secref{sec:highlights}} - \hyperref[item:curv-3]{\ref{item:curv-3}}.
	\smallskip
	\item	[]{\bf {\footnotesize \textsf{Proof of \hyperref[item:ric-curv-2]{(\ref{item:ric-curv-2})}}}.}
	\smallskip
	Straightforward from \hyperref[item:ric-curv-7]{item (\ref{item:ric-curv-7})} above. Note $\Ric({\sf n}, {\sf n})$ is $\y$-independent!
	\smallskip
	\item	[]{\bf {\footnotesize \textsf{Proof of \hyperref[item:ric-curv-3]{(\ref{item:ric-curv-3})}}}.}
	Straightforward from \hyperref[item:ric-curv-2]{item (\ref{item:ric-curv-2})} above and curvature symmetries of $\Riem$; see~\cite[Section 2.2.1]{FL2}.
	\smallskip
	\item	[]{\bf {\footnotesize \textsf{Proof of \hyperref[item:ric-curv-4]{(\ref{item:ric-curv-4})}}}.}
\par Recall that the Ricci scalar is given by
	\begin{align*}
		\mathcal{R}ic(\y) = y^j\Riem^{\;\,i}_{j\;\;il}(y)y^l.
	\end{align*}
\par By \hyperref[sec:highlights]{\secref{sec:highlights}} - \hyperref[item:highlight-6]{\ref{item:highlight-6}}, we know that $\Riem$ is independent of $y^1$, thus when $\vecv \neq 0$, we get	
\begin{align*}
	\mathcal{R}ic(\y) = y^j\Riem^{\;\,i}_{j\;\;il}(\vecv)y^l.
\end{align*}
Consequently, using the curvature relations \hyperref[sec:highlights]{\secref{sec:highlights}} - \hyperref[item:curv-1]{\ref{item:curv-1}}, when $\vecv \neq 0$, we get
\begin{align*}
		\mathcal{R}ic(\y) &=  y^j\Riem^{\;\,i}_{j\;\;il}(\y)y^l\\
		&=  y^jy^l\Riem^{\;\,\alpha}_{j\;\;\alpha l}(\y) + y^jy^l\Riem^{\;\,1}_{j\;\;1 l}(\y)\\
		&= y^\beta y^\gamma \; {\Riem}^{\;\;\alpha}_{\beta\;\; \alpha\gamma}(\vecv) + y^\beta y^\gamma\Riem^{\;\;1}_{\beta\;\;1 \gamma}(\y) + (y^1)^2\Riem^{\;\,\alpha}_{1 \;\;\alpha 1}(\y)\\
		&=  y^\beta y^\gamma\;{^{\sf ideal}\Riem}^{\;\;\alpha}_{\beta\;\; \alpha\gamma}(\vecv) \\
		& -\left(\nicefrac{\rho_{uu}}{\rho_u}\right)^2y^\beta y^\gamma\left( g_{\beta \gamma}\delta^\alpha_\alpha - g_{\beta\alpha}\delta^\alpha_\gamma \right) \\ 
		&+ \K^{\perp} g_{\beta\gamma}y^\beta y^\gamma + (n-1)\K^{\perp} (y^1)^2.
	\end{align*}
	As a result, we obtain
\begin{align}\label{eq:scalar-Ric-relation}
	\mathcal{R}ic(\y) &= y^\beta y^\gamma \; {^{\sf ideal}\Riem}^{\;\;\alpha}_{\beta\;\; \alpha\gamma}(\vecv)\\
	& - (n-2) \left(\nicefrac{\rho_{uu}}{\rho_u}\right)^2 \|\vecv\|_{\y}^2 + \K^{\perp} \|\vecv\|_{\y}^2 \notag \\
	&  - (n-1)\left(\nicefrac{\rho_{uuu}}{\rho_u}\right) (y^1)^2.\notag
\end{align} 	
\par In particular, restricted to $\vecv \in T\Sigma_0$ (i.e., setting $y^1 = 0$ and $\y = \vecv$), we get
	\begin{align*}
		\mathcal{R}ic(\vecv) &= {^{\sf ideal}\mathcal{R}ic}(\vecv) - (n-2) \left(\nicefrac{\rho_{uu}}{\rho_u}\right)^2 \|\vecv\|_{\vecv}^2 + \K^{\perp} \|\vecv\|^2_{\vecv}\\
		&= {^{\sf ideal}\mathcal{R}ic}(\vecv) - (n-2) \mathcal{L}F^2(\vecv) + \K^{\perp} F^2(\vecv).
	\end{align*}
	\item	[]{\bf {\footnotesize \textsf{Proof of \hyperref[item:ric-curv-5]{(\ref{item:ric-curv-5})}}}.}
	First we note the induced vertical double tangent vectors satisfy
	\begin{align*}
	{^\Sigma\dt{\partial}}_\alpha = \dt{\partial}_\alpha;
	\end{align*}
	see~\cite[Section 4]{FL2}.
\par From \hyperref[eq:scalar-Ric-relation]{\eqref{eq:scalar-Ric-relation}}, one gets
	\begin{align*}
		\mathcal{R}ic_{\alpha\beta}(\y) &= \nicefrac{1}{2} \, \dt{\partial}_{\beta} \dt{\partial}_{\alpha} \left(\mathcal{R}ic(\y)\right)\\
		&= \nicefrac{1}{2} \, {\dt{\partial}}_{\beta} {\dt{\partial}}_{\alpha} \bigg( y^\kappa y^\gamma\;  {^{\sf ideal}\Riem}^{\;\;\eta}_{\kappa\;\; \eta\gamma}(\vecv)  \\
		&-  (n-2) \left(\nicefrac{\rho_{uu}}{\rho_u}\right)^2 \|\vecv\|_{\y}^2 + \K^{\perp} \|\vecv\|^2_{\y}\\
		& - (n-1)\left(\nicefrac{\rho_{uuu}}{\rho_u}\right) (y^1)^2 \bigg)\\
		&= \nicefrac{1}{2} \, {^\Sigma\dt{\partial}}_{\beta} {^\Sigma\dt{\partial}}_{\alpha} \left(y^\kappa y^\gamma\;  {^{\sf ideal}\Riem}^{\;\;\eta}_{\kappa\;\; \eta\gamma}(\vecv) \right) \\
		& - \left( (n-2) \mathcal{L} - \K^{\perp} \right) g_{\alpha\beta}(\y)\\
		&= {^{\sf ideal}\mathcal{R}ic}_{\alpha\beta}(\vecv) - \left( (n-2) \mathcal{L} - \K^{\perp} \right) g_{\alpha\beta}(\y).
	\end{align*}
\par By the fact that $g_{\alpha\beta}$ is independent of $y^1$, we infer that for $\vecv \neq 0$, we have
\begin{align*}
g_{\alpha\beta}(\y) = g_{\alpha\beta}(\vecv).
\end{align*} 
\par Similarly,
	\begin{align*}
		\mathcal{R}ic_{11}(\y) &= \nicefrac{1}{2} \, \dt{\partial}_{u} \dt{\partial}_{u} \left( {^{\sf ideal}\mathcal{R}ic}(\vecv) -  (n-2) \left(\nicefrac{\rho_{uu}}{\rho_u}\right)^2 \|\vecv\|_{\vecv}^2 - (n-1)\left(\nicefrac{\rho_{uuu}}{\rho_u}\right) (y^1)^2\right)\\
		&=  -(n-1)\left(\nicefrac{\rho_{uuu}}{\rho_u}\right) = (n-1)\K^{\perp};
	\end{align*}
and
	\begin{align*}
	\mathcal{R}ic_{\alpha1}(\y) &= \nicefrac{1}{2} \, \dt{\partial}_{\alpha} \dt{\partial}_{u} \Big( {^{\sf ideal}\mathcal{R}ic}(\vecv) - (n-2) \left(\nicefrac{\rho_{uu}}{\rho_u}\right)^2 \|\vecv\|_{\y}^2 - (n-1)\left(\nicefrac{\rho_{uuu}}{\rho_u}\right) (y^1)^2 \Big)\\
	&= \dt{\partial}_{\alpha} \Big( (n-1)\left(\nicefrac{\rho_{uuu}}{\rho_u}\right) y^1\Big)\\
	&=0.
\end{align*}
Thus, $\Ric$ and $\mathcal{R}ic_{\sf tensor}$ satisfy the same tangential and normal curvature relations.
\smallskip 
\item []{\bf {\footnotesize \textsf{Proof of \hyperref[item:ric-curv-6]{(\ref{item:ric-curv-6})}}}.}
By taking the trace and using items \hyperref[item:ric-curv-1]{(\ref{item:ric-curv-1})} and \hyperref[item:ric-curv-2]{(\ref{item:ric-curv-2})} above. 
\smallskip
\item []{\bf {\footnotesize \textsf{Proof of \hyperref[item:ric-curv-7]{(\ref{item:ric-curv-7})}}}.}
By taking the trace and using \hyperref[item:curv-5]{item (\ref{item:ric-curv-5})} above.
\end{enumerate}
\end{proof}	

\begin{lemma}\label{lem:const-scalar-one-crit}
	Suppose $\sigma$ has at least one critical point and $(M,F)$ has constant (Cartan or Akbarzadeh) scalar curvature. Then, both Cartan and Akbarzadeh scalar curvature are constant and coincide. Furthermore, the warping function $\zeta := \rho_u$ is given by
	\begin{align*}
	 \zeta = 
	\begin{cases}
		\left(\nicefrac{{\mathdutchcal{s}}_{\sf level}}{{\mathdutchcal{s}}}\right)^{\nicefrac{1}{2}}	\sin\left( {\mathdutchcal{s}}^{\nicefrac{1}{2}} \, u \right) & {\mathdutchcal{s}} > 0\\
		{{\mathdutchcal{s}}^{\nicefrac{1}{2}}_{{\emph{\textsf{level}}}}}\, u &  {\mathdutchcal{s}} = 0\\
		\left(\nicefrac{{\mathdutchcal{s}}_{\sf level}}{\left|{\mathdutchcal{s}}\right|}\right)^{\nicefrac{1}{2}}	\sinh\left( \left|\mathdutchcal{s}\right|^{\nicefrac{1}{2}} \, u \right) & {\mathdutchcal{s}} < 0
	\end{cases},
	\end{align*}
where ${\mathdutchcal{s}}_{\sf level}$ denotes the constant normalized scalar curvature of the ideal hypersurface. 
 In particular, 
\begin{align*}
	\mathsf{s} = \mathdutchcal{s} = \K^\perp.
\end{align*}
\end{lemma}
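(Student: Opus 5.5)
The plan is to turn the scalar-curvature identity of Lemma~\ref{lem:curvature} into an ODE for the warping function $\zeta := \rho_u$ along the normal direction. This works because near a critical point the ideal hypersurface has constant curvature, so the ideal scalar curvature is a known constant.

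First, fix a critical point $p$ and take $u$ to be the distance from $p$, as in \ref{item:const-curv-2}. By Lemma~\ref{lemma:crit}, the regular domain is a single product $I\times\Sigma$ carrying one ideal metric $g_{\sf ideal}$. By \ref{item:const-curv-4}, $g_{\sf ideal}$ has constant flag curvature $c:=\rho_{uu}(0)^2>0$. Hence ${\sf s}_{\sf ideal}$ and $\mathdutchcal{s}_{\sf ideal}$ are both vector-independent and both equal $c$; this common constant is what the statement calls $\mathdutchcal{s}_{\sf level}$.

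Next, insert $\mathcal L=\zeta'^2/\zeta^2$, $\K^\perp=-\zeta''/\zeta$ and ${\sf s}_{\sf level}=c/\zeta^2$ into \eqref{eq:scalar-normalized}, and assume ${\sf s}$ is the constant. Multiplying by $n\zeta^2$ gives
\begin{align*}
2\zeta\zeta'' + (n-2)\zeta'^2 + n{\sf s}\,\zeta^2 = (n-2)c .
\end{align*}
By item~(\ref{item:ric-curv-7}) of Lemma~\ref{lem:curvature}, the identical ODE holds with $\mathdutchcal{s}$ in place of ${\sf s}$, and the ideal constant $c$ is the same in both cases.

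The key step is to solve this ODE despite its singularity at $u=0$. Set $E:=\zeta'^2+{\sf s}\zeta^2-c$. The ODE is equivalent to $2\zeta(\zeta''+{\sf s}\zeta)=-(n-2)E$. Therefore
\begin{align*}
E' = 2\zeta'(\zeta''+{\sf s}\zeta) = -(n-2)\,\frac{\zeta'}{\zeta}\,E ,
\end{align*}
so $\zeta^{n-2}E$ is constant on the regular interval. By \ref{item:const-curv-3}, as $u\downarrow 0$ we have $\zeta\to0$ while $\zeta'\to\rho_{uu}(0)$ stays bounded. Hence $\zeta^{n-2}E\to0$ for $n\ge3$, which forces $E\equiv0$. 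The ODE then reduces to $\zeta''=-{\sf s}\zeta$ with $\zeta(0)=0$ and $\zeta'(0)=\pm\sqrt c$. The sign can be normalized by orienting $u$. This yields the $\sin$, linear and $\sinh$ formulas, with amplitude $\sqrt{c/{\sf s}}$, $\sqrt c$ and $\sqrt{c/|{\sf s}|}$ respectively.

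Finally, $\K^\perp=-\zeta''/\zeta={\sf s}$. Running the same argument for $\mathdutchcal{s}$ gives $\K^\perp=\mathdutchcal{s}$. Since $\K^\perp$ is intrinsic, ${\sf s}=\mathdutchcal{s}=\K^\perp$: whichever of the two scalar curvatures is assumed constant, the other is forced to equal the same constant.

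I expect the main obstacle to be the singular endpoint. One must justify the limits at $u=0$ through \ref{item:const-curv-3}, and check that $E$ stays bounded there, which holds because $\zeta'$ has a finite limit. The case $n=2$ needs separate comment: there the ODE degenerates to $\zeta''=-{\sf s}\zeta$ directly, and the amplitude then comes from \ref{item:const-curv-3} rather than from $c$.
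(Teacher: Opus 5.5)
Your proposal is correct and follows the paper's proof essentially step for step. (Crit4) gives the constant $c=\rho_{uu}(0)^2$, \eqref{eq:scalar-normalized} becomes an ODE for $\zeta$, the integrating factor $2\zeta^{n-3}\zeta_u$ (your first integral $\zeta^{n-2}E$) gives $\zeta_u^2+\mathdutchcal{s}\zeta^2=c$, and $\zeta_{uu}+\mathdutchcal{s}\zeta=0$ with $\zeta(0)=0$ gives the three profiles and $\K^\perp=\mathdutchcal{s}$. You are in fact more careful than the paper: you explain why the integration constant vanishes, and you never divide by $\zeta_u$.

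\textbf{A small slip.} You say the sign of $\zeta'(0)$ can be normalized by orienting $u$. It cannot: $\zeta'(0)=\rho_{uu}(0)$ is invariant under $u\mapsto -u$, so its sign is fixed by whether $p$ is a maximum or a minimum of $\sigma$. This is harmless, since only $\zeta^2$ enters the metric.

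\textbf{A step you share with the paper that deserves a line of justification.} By \eqref{eq:scalar-normalized},
$$
{\sf s}-\mathdutchcal{s}=\tfrac{n-2}{n}\,\rho_u^{-2}\,\bigl({\sf s}_{\sf ideal}-\mathdutchcal{s}_{\sf ideal}\bigr),
$$
so both the ${\sf s}$-version of your ODE and the claim ${\sf s}=\mathdutchcal{s}$ rest on ${\sf s}_{\sf ideal}=c$. Constant flag curvature gives $\mathdutchcal{s}_{\sf ideal}=c$, but ${\sf s}_{\sf ideal}=c$ is not a formal consequence of it, because the Cartan trace involves planes not containing the flag pole. It does follow here by letting $u\downarrow 0$ in (Reg4)(b). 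The vectors $\partial_\alpha$ have $g$-length $O(|\rho_u|)$, and $\Riem$ is bounded near $p$. Together these force
$$
{}^{\sf ideal}\Riem(X,Y)Z=c\,\bigl(\langle Y,Z\rangle X-\langle X,Z\rangle Y\bigr),
$$
whose trace gives ${\sf s}_{\sf ideal}=c$.
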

\begin{proof}[\small \textbf{Proof}]
	By \hyperref[sec:highlights]{\secref{sec:highlights}} - \hyperref[item:const-curv-4]{\ref{item:const-curv-4}}, we know that $\left( \Sigma, g_{\sf ideal} \right)$ has constant flag curvature $\rho^2_{uu}(0)$; thus 
	\begin{align*}
	{\mathdutchcal{s}_{\sf ideal}} = \K_{\sf ideal} = \rho^2_{uu}(0).
	\end{align*}
	\par The equation \hyperref[eq:scalar-normalized]{\eqref{eq:scalar-normalized}}, reduces to
	\begin{align*}
		\mathdutchcal{s}\rho_u^2 = - \nicefrac{2}{n}\; \rho_u\rho_{uuu} + \nicefrac{(n-2)}{n} \; \mathdutchcal{s}_{\sf ideal} \, - \, \nicefrac{(n-2)}{n} \;\rho_{uu}^2.
	\end{align*}
	By the change of variable $\zeta := \rho_u$, we get
	\begin{align*}
	\zeta\zeta_{uu} + \mathdutchcal{s}\zeta^2  - \nicefrac{(n-2)}{2} \; {\mathdutchcal{s}_{\sf ideal}} + \, \nicefrac{(n-2)}{2} \;\zeta_{u}^2 =  0.
	\end{align*}
\par Now, similar to the proof of \cite[Theorem 24]{Kuh}, multiplying by the integrating factor $2\zeta^{n-3}\zeta_u$,  the equation reduces to
	\begin{align*}
	\zeta^{n-2}\left( \zeta_u^2 + {\mathdutchcal{s}} \zeta^2 -  {\mathdutchcal{s}_{\sf ideal}}   \right) = 0,
	\end{align*}
	thus on the regular domain, one obtains
	\begin{align*}
	\zeta_u^2 + {\mathdutchcal{s}} \zeta^2 -  {\mathdutchcal{s}_{\sf ideal}} = 0,
	\end{align*}
	which upon differentiation gives
	\begin{align*}
	2\zeta_u\zeta_{uu} + 2{\mathdutchcal{s}} \zeta\zeta_u = 0.
	\end{align*}
	\par Therefore, we have obtained the Fourier equation
	\begin{align*}
	\begin{cases}
		\zeta_{uu} + {\mathdutchcal{s}} \zeta = 0\\
		\zeta(0) = 0\\
		\zeta_{u}(0) = \sqrt{{\mathdutchcal{s}}_{\sf ideal}}
	\end{cases}.
	\end{align*}
	The possible solutions thus are in the form of $c \; \mathrm{sn}_{\kappa}$ function for $\kappa = \mathdutchcal{s}$, more precisely,
	\begin{align*}
	\rho_u = \zeta = 
	\begin{cases}
		\left(\nicefrac{{\mathdutchcal{s}}_{\sf ideal}}{{\mathdutchcal{s}}}\right)^{\nicefrac{1}{2}}	\sin\left( {\mathdutchcal{s}}^{\nicefrac{1}{2}} \, u \right) & {\mathdutchcal{s}} > 0\\
		{{\mathdutchcal{s}}^{\nicefrac{1}{2}}_{\textsf{ideal}}}\, u &  {\mathdutchcal{s}} = 0\\
		\left(\nicefrac{{\mathdutchcal{s}}_{\sf ideal}}{\left|{\mathdutchcal{s}}\right|}\right)^{\nicefrac{1}{2}}	\sinh\left( \left|\mathdutchcal{s}\right|^{\nicefrac{1}{2}} \, u \right) & {\mathdutchcal{s}} < 0
	\end{cases}.
	\end{align*}
\par The last claim follows by noting that
	\[
	\K^\perp = \nicefrac{\zeta_{uu}}{\zeta} = \mathdutchcal{s}
	\]
	and the similar relation in terms of $\mathsf{s}$.
\end{proof}
\begin{lemma}\label{lem:vanishing-cross-scalar}
	Suppose $\sigma$ has no critical points, $\K^{\perp}$ (recall the normal flag curvature) is constant, and $(M,F)$ has constant normalized Akbarzadeh scalar curvature $\mathdutchcal{s} = \K^{\perp}$ (resp. constant normalized Cartan scalar curvature ${\sf s} = \K^{\perp}$ ). Then, $\left( \Sigma, g_{\sf ideal} \right)$ is Ricci scalar flat (resp. Cartan Ricci flat). 
\end{lemma}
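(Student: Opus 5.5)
The plan is to insert the hypotheses into the normalized scalar curvature identity \eqref{eq:scalar-normalized} of Lemma~\ref{lem:curvature} (item (\ref{item:ric-curv-6}), or item (\ref{item:ric-curv-7}) for the Akbarzadeh version) and show that the cross-sectional term is forced to vanish. This should be possible because, in the absence of critical points, nothing pins $\rho_u$ to zero anywhere. On the regular domain, which here is all of $M\cong\R\times\Sigma$ by Lemma~\ref{lemma:crit}, setting $\mathdutchcal{s}=\K^{\perp}=:k$ in \eqref{eq:scalar-normalized} gives
\begin{align*}
\tfrac{n-2}{n}\,k = \tfrac{n-2}{n}\left(\mathdutchcal{s}_{\sf level}-\mathcal{L}\right).
\end{align*}
Multiplying by $\rho_u^2$ and using $\mathdutchcal{s}_{\sf level}=\rho_u^{-2}\mathdutchcal{s}_{\sf ideal}$ and $\mathcal{L}=\rho_{uu}^2/\rho_u^2$, I obtain, for $n\ge 3$,
\begin{align*}
\mathdutchcal{s}_{\sf ideal}(\vecv) = \rho_{uu}^2 + k\,\rho_u^2 .
\end{align*}
The left-hand side is independent of $u$, since $g_{\sf ideal}$ is, whereas the right-hand side depends only on $u$. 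Hence $\mathdutchcal{s}_{\sf ideal}$ is a constant $c$.

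Next I use the normal curvature. Since $\K^{\perp}=-\rho_{uuu}/\rho_u=k$ is constant, $\zeta:=\rho_u$ satisfies $\zeta_{uu}+k\zeta=0$ on all of $\R$, and $\zeta$ is nowhere zero because there are no critical points. (Note $\zeta=e^{-\sigma}\sigma_u$, which vanishes exactly at critical points.) The quantity $\zeta_u^2+k\zeta^2$ is the first integral of this ODE, and by the previous paragraph it equals $c$. I then split into cases on $k$. If $k>0$, $\zeta$ is a nontrivial sinusoid and must vanish somewhere on $\R$, which is a contradiction; so this case cannot occur, or is excluded by nontriviality. If $k=0$, $\zeta$ is affine and nonvanishing on $\R$, so it is a nonzero constant and $c=\zeta_u^2=0$. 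If $k<0$, write $\zeta=Ae^{\sqrt{|k|}u}+Be^{-\sqrt{|k|}u}$; then $\zeta_u^2+k\zeta^2=-4|k|AB$, and a nowhere-vanishing $\zeta$ requires $AB\ge 0$. So $c\le 0$, with $c=0$ exactly when $\zeta$ is a pure exponential. The case $AB>0$, i.e. $\zeta\propto\cosh$, is where the argument needs more care.

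The main obstacle is the $\cosh$ case with $c<0$. I would first check the sign convention in Lemma~\ref{lem:curvature} more carefully. Then I would try to exclude this case using completeness together with the hypothesis that $\mathdutchcal{s}$ equals $\K^{\perp}$ \emph{pointwise in $\y$}. Precisely, I would evaluate the full formula \eqref{eq:scalar-Ric-relation} for general $\y=(y^1,\vecv)$ rather than only the trace, and compare the $\vecv$-dependent parts. If that fails, I would accept that the lemma's conclusion, $c=0$, i.e. vanishing scalar (resp. Cartan scalar) curvature of $g_{\sf ideal}$, holds in the pure-exponential and flat cases. The final step is to translate $\mathdutchcal{s}_{\sf ideal}\equiv 0$, or ${\sf s}_{\sf ideal}\equiv 0$ in the Cartan version via the verbatim item (\ref{item:ric-curv-6}), into the stated flatness of $(\Sigma,g_{\sf ideal})$.
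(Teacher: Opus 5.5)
Your reduction is correct as far as it goes. With $\mathdutchcal{s}=\K^{\perp}=k$, \eqref{eq:scalar-normalized} gives $\mathdutchcal{s}_{\mathsf{ideal}}=\rho_{uu}^2+k\rho_u^2=\zeta_u^2+k\zeta^2=:c$, a first integral of $\zeta_{uu}+k\zeta=0$, and for $k<0$ your formula $c=-4|k|AB$ is right. But there is a genuine gap exactly where you flag it. The case $k<0$, $AB>0$ ($\zeta$ of $\cosh$ type, $c<0$) is never excluded, and retreating to ``the conclusion holds in the exponential and flat cases'' proves a strictly weaker statement.

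The remedies you propose will not close the gap. There is no sign error. Comparing untraced components of \eqref{eq:scalar-Ric-relation} also yields nothing new about $\zeta$: the hypothesis only constrains a trace, and the traced identity you already used holds for every direction with nonzero tangential part. The obstruction is not curvature-theoretic at all.

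The missing idea is this. You only used that $\zeta=\rho_u$ never vanishes, but you also know that $\rho=e^{-\sigma}>0$ for \emph{all} $u\in\R$. The coordinate $u$ runs over the whole line, since $M\cong\R\times\Sigma$ by Lemma~\ref{lemma:crit} and the normal geodesics are complete. Because $u$ is oriented along the gradient of $\sigma$, $\sigma_u>0$, so $\zeta=-e^{-\sigma}\sigma_u<0$. Hence $\rho$ is decreasing and bounded below by $0$, which forces $\int_0^\infty|\zeta|\,du\le\rho(0)<\infty$.

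A $\cosh$-type $\zeta$ has $|\zeta|\to\infty$ as $u\to+\infty$, so it is impossible. The same integrability rules out the growing exponential, and also the case $k=0$ (harmless for your conclusion, but that case simply does not occur). This leaves only $\zeta=Be^{-\sqrt{|k|}\,u}$ with $B<0$, for which $c=0$.

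This is exactly the paper's route. It integrates $\rho_{uuu}=-\K^{\perp}\rho_u$ to $\rho_{uu}=-\K^{\perp}\rho+C$ and writes $\rho=c_1e^{-\sqrt{|\K^{\perp}|}\,u}+c_2e^{\sqrt{|\K^{\perp}|}\,u}+C/\K^{\perp}$. It uses $\rho>0$ as $u\to\pm\infty$ to get $c_1,c_2\ge 0$, and $\rho_u<0$ to get $c_2=0$. It then reads off $\mathcal{L}=|\K^{\perp}|$, so the level scalar curvature is $\K^{\perp}+|\K^{\perp}|=0$. With this step inserted, your first-integral formulation is an equivalent and clean way to finish.

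Finally, what either argument delivers is the vanishing of the (Akbarzadeh, resp.\ Cartan) scalar curvature of $g_{\mathsf{ideal}}$. That is the content of the conclusion, and no further ``translation'' to tensorial Ricci-flatness follows from it.
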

\begin{proof}[\small \textbf{Proof}]
	Upon integrating
	\begin{align*}
		\rho_{uuu} = -\K^{\perp} \rho_u,
	\end{align*}
	we get
	\begin{align*}
		\rho_{uu} = -\K^{\perp} \rho + C.
	\end{align*}
	By the fact that $\rho$ and $\rho_u$ are both nowhere vanishing on $\R$, we deduce that $\K^{\perp} < 0$. The characteristic roots are $\lambda = \pm |\K^{\perp}|^{\nicefrac{1}{2}}$; thus, the complementary solution is
	\begin{align*}
		\rho_{h} = c_1e^{-u|\K^{\perp}|^{\nicefrac{1}{2}}} + c_2e^{u|\K^{\perp}|^{\nicefrac{1}{2}}} = c_1\sinh(u|\K^{\perp}|^{\nicefrac{1}{2}}) + c_2\cosh(u|\K^{\perp}|^{\nicefrac{1}{2}}),
	\end{align*}
 for which, $\rho_{p} = \nicefrac{C}{\K^{\perp}}$ is a particular solution.
Therefore,
\begin{align*}
\rho = c_1e^{-u|\K^{\perp}|^{\nicefrac{1}{2}}} + c_2e^{u|\K^{\perp}|^{\nicefrac{1}{2}}} + \nicefrac{C}{\K^{\perp}}
\end{align*}
\par Since by definition $\rho >0$ and considering the asymptotic as $u \to \pm \infty$, one deduces that $c_1,c_2 \ge 0$. Noting $\rho_u<0$, we deduce that $c_2 = 0$ therefore,
\begin{align*}
	\rho = c_1e^{-u{\left|\K^{\perp}\right|}^{\nicefrac{1}{2}}} + \nicefrac{C}{\K^{\perp}},
\end{align*}
and $\nicefrac{C}{\K^{\perp}} \ge 0$; otherwise, $\rho$ would admit a zero. 
\par We have concluded that
\begin{align*}
\zeta = \rho_u = -c_1|\K^{\perp}|^{\nicefrac{1}{2}}e^{-u|\K^{\perp}|^{\nicefrac{1}{2}}}, \quad c_1\ge 0
\end{align*}
therefore,
\begin{align*}
g = du^2 + c_1^2\left| \K^{\perp} \right| e^{-2u|\K^{\perp}|^{\nicefrac{1}{2}}} g_{\sf ideal}.
\end{align*}
Form \hyperref[eq:scalar-normalized]{\eqref{eq:scalar-normalized}}, and by the hypothesis $\mathdutchcal{s} = \K^{\perp}$ we obtain
\begin{align*}
{^{\sf level}\mathdutchcal{s}} =	\mathdutchcal{s} + \mathcal{L} =  \mathdutchcal{s} + \left(\nicefrac{\zeta_{u}}{\zeta}\right)^2 = \K^{\perp} + \left| \K^{\perp} \right| = 0.
\end{align*}
The statement in terms of the Cartan scalar curvature ${\sf s}$ follows by a verbatim argument. 
\end{proof}
\subsection*{Proof of the \hyperref[thrm:classification]{Main Theorem}}
\hfill
\par The fact that $\sigma$ has at most two critical points and that regular hypersurfaces are diffeomoprhic, are established in \hyperref[lemma:crit]{Lemma~\ref{lemma:crit}}. 
\par 	As an immediate consequence of \hyperref[lem:curvature]{Lemma~\ref{lem:curvature}} and items \hyperref[item:curv-5]{\ref{item:curv-5}} - \hyperref[item:curv-6]{\ref{item:curv-6}} from \hyperref[sec:highlights]{\secref{sec:highlights}}, one obtains the following. 
	\begin{itemize}
	\item \label{item:const-curv-inherit-1} If $(M,F)$ has isotropic (resp. constant)  flag curvature, then $\left(\Sigma, {g}_{\sf ideal} \right)$ also has isotropic flag curvature (resp. constant flag curvature and $\mathdutchcal{s} = \K^{\perp}$), \underline{but, not vice-versa}.
	\medskip
	\item \label{item:const-curv-inherit-2} If $(M,F)$ has sectional flag curvature, then $\left(\Sigma, {g}_{\sf ideal} \right)$ also has sectional flag curvature, \underline{but, not vice-versa}.
	\medskip
	\item \label{item:const-curv-inherit-3} If $(M,F)$ is Einstein (resp. is Cartan-Einstein), then $\left(\Sigma, g_{\sf ideal}\right)$ is Einstein (resp. is Cartan-Einstein) and $\mathdutchcal{s} = K(x) =  \K^{\perp}$ (resp. $\mathsf{s} = K(x) = \K^{\perp}$); \underline{but, not vice-versa}.
	\medskip
	\item \label{item:const-curv-inherit-4} If $(M,F)$ is strongly Einstein (resp. is strongly Cartan-Einstein), then $\left(\Sigma, g_{\sf ideal}\right)$ is strongly Einstein (resp. is strongly Cartan-Einstein) and $\mathdutchcal{s} = K = \K^{\perp}$ is constant (resp. $\mathsf{s} = K = \K^{\perp}$ is constant); \underline{but, not vice-versa}. 
	\medskip
	\item \label{item:const-curv-inherit-5} If $(M,F)$ has constant Cartan scalar curvature (resp. Akbarzadeh scalar curvature) then, $\left(\Sigma, g_{\sf ideal}\right)$ also has constant Cartan scalar curvature (resp. Akbarzadeh scalar curvature), \underline{but, not vice-versa}.
	\medskip
	\item \label{item:const-curv-inherit-6} \emph{$(M,F)$ has vector-independent Cartan scalar curvature (resp. Akbarzadeh scalar curvature), if and only if, $\left(\Sigma, g_{\sf ideal}\right)$ also has vector-independent Cartan scalar curvature (resp. Akbarzadeh scalar curvature).}
	\end{itemize} 
\subsubsection*{{\bf{\footnotesize \textsf{Proof of \hyperref[item:class-1]{\textrm{Case~$\sf I$}}}}}}
\begin{enumerate}
	\item []{\footnotesize \textsf{Proof of  \hyperref[item:class-1-a]{\textrm{Case~$\sf I$a}}}}. This is the content of \hyperref[lemma:crit]{Lemma~\ref{lemma:crit}} - \hyperref[item:level-diff-2]{item (\ref{item:level-diff-2})}.
	\smallskip 
	\item []{\footnotesize \textsf{Proof of \hyperref[item:class-1-b]{\textrm{Case~$\sf I$b}}}}.
	Since there exist no critical points, the normal vector field ${\sf n}$ is a global vector field. By the characterization Theorem~\ref{thrm:Riem-linearization}, $g^{\sf n}$ is then a global Riemannian \cpt. 
	\smallskip
	\item []{\footnotesize \textsf{Proof of \hyperref[item:class-1-c]{\textrm{Case~$\sf I$c}}}}.
	Based on \hyperref[item:class-1-b]{\textrm{\footnotesize Case~$\sf I$b}} above, this follows from the Riemannian classification (see \hyperref[sec:intro]{\secref{sec:intro}}, page 1) once we argue that  $(M, g^n)$ is complete. 
 \par By \underline{tameness} we deduce that there exists a $0 < \Lambda_1 \le \Lambda_2 < \infty$ such that 
\begin{align*}
	\Lambda_1 < g_{\y}({\footnotesize \textit{\textbf{u}}},{\footnotesize \textit{\textbf{u}}}) < \Lambda_2,
\end{align*}
whenever $F(\y) = F(\footnotesize \textit{\textbf{u}}) = 1$.
This implies
\begin{align*}
F^2(u) \Lambda_1 < g^{\sf n}({\footnotesize \textit{\textbf{u}}},{\footnotesize \textit{\textbf{u}}}) < \Lambda_2 F^2(u).
\end{align*}
\par Consequently, the forward or backward length of a smooth curve measured w.r.t. $g^{\sf n}$ and w.r.t. $g$ are comparable. Upon using the definition of distance by minimizing curve lengths, one deduces that $\dist_{g^{\sf n}}$ and $\dist_F$ (forward or backward) are equivalent distances. Since $(M,F)$ is (forward or backward) complete, we deduce that $(M,g^{\sf n})$ is a complete Riemannian manifold.
\item []{\footnotesize \textsf{Proof of \hyperref[item:class-1-d]{\textrm{Case~$\sf I$d}}}}. This is established in \hyperref[lem:vanishing-cross-scalar]{Lemma~\ref{lem:vanishing-cross-scalar}}.
\end{enumerate}
\subsubsection*{{\bf{\footnotesize \textsf{Proof of \hyperref[item:class-2]{\textrm{Case~$\sf II$}}}}}}
\begin{enumerate}
	\item []{\footnotesize \textsf{Proof of \hyperref[item:class-2-a]{\textrm{Case~$\sf II$a}}}}. 
	Suppose the only critical point is an absolute minimum (see \hyperref[lemma:crit]{Lemma~\ref{lemma:crit}} - \hyperref[item:level-diff-1]{item (\ref{item:level-diff-1})}), then by \hyperref[lemma:crit]{Lemma~\ref{lemma:crit}} - \hyperref[item:level-diff-2]{item (\ref{item:level-diff-2})} and \hyperref[sec:highlights]{\secref{sec:highlights}} - \hyperref[item:const-curv-4]{\ref{item:const-curv-4}}, we know that $M\smallsetminus \{p\}$ is diffeomorphic to $\R^{\ge 0} \times \Sp^{n-1}$. Thus adding $p$ back, we get $M \cong \R^n$. When $p$ is an absolute maximum, the proof is similar.
	\smallskip 	
\item []{\footnotesize \textsf{Proof of \hyperref[item:class-2-b]{\textrm{Case~$\sf II$b}}}}.
This was established in \hyperref[lemma:crit]{Lemma~\ref{lemma:crit}}
\smallskip
\item []{\footnotesize \textsf{Proof of \hyperref[item:class-2-c]{\textrm{Case~$\sf II$c}}}}.
By the hypothesis and using \hyperref[lem:const-scalar-one-crit]{Lemma~\ref{lem:const-scalar-one-crit}}, we infer that the warping function $\rho_u = \zeta$ is of the form,
	\begin{align*}
		\rho_u = \zeta = 
		\begin{cases}
			{{\mathdutchcal{s}}^{\nicefrac{1}{2}}_{\textsf{level}}}\, u &  {\mathdutchcal{s}} = 0\\
			\left(\nicefrac{{\mathdutchcal{s}}_{\sf level}}{\left|{\mathdutchcal{s}}\right|}\right)^{\nicefrac{1}{2}}	\sinh\left( \left|\mathdutchcal{s}\right|^{\nicefrac{1}{2}} \, u \right) & {\mathdutchcal{s}} < 0
		\end{cases},
	\end{align*}
	since the case $ {\mathdutchcal{s}} > 0$ would have implied more than one critical point. 
\par In any case, ${\mathdutchcal{s}} = - \nicefrac{\rho_{uuu}}{\rho_u} = \K^{\perp}$ holds true. By \hyperref[item:class-2-b]{\footnotesize \textrm{Case~$\sf II$b}} above, \hyperref[sec:highlights]{\secref{sec:highlights}} - \hyperref[item:curv-5]{\ref{item:curv-5}} and \hyperref[eq:scalar-normalized]{\eqref{eq:scalar-normalized}}, one deduces that
\begin{align*}
	\K(X\wedge Y) &=  {^{\textsf{level}}\K} - \left(\nicefrac{\rho_{uu}}{\rho_u}\right)^2 \\
	&= 	{^{\sf level}{\mathdutchcal{s}}} - \left(\nicefrac{\rho_{uu}}{\rho_u}\right)^2 \\
	&= \K^{\perp},
\end{align*} 
and the conclusion follows by noting that both Cartan and Chern curvature tensors give rise to the same flag curvatures. 
\end{enumerate}
\subsubsection*{{\bf{\footnotesize \textsf{Proof of \hyperref[item:class-3]{\textrm{Case~$\sf III$}}}}}}
\begin{enumerate}
	\item []{\footnotesize \textsf{Proof of \hyperref[item:class-3-a]{\textrm{Case~$\sf III$a}}}}.
	Let $p,q$ be the critical points; $p$ is the absolute minimum and $q$ is the absolute maximum. Then by \hyperref[lemma:crit]{Lemma~\ref{lemma:crit}}, $M \smallsetminus \left\{p,q\right\}$ is diffeomorphic to $(a,b)\times \Sp^{n-1}$. Thus adding back the points $p$ and $q$, we deduce that $M$ is diffeomorphic to $\Sp^n$.
	\smallskip  
	\item []{\footnotesize \textsf{Proof of \hyperref[item:class-3-b]{\textrm{Case~$\sf III$b}}}}.
	This is established in \cite[Lemma~5.1]{FL2}; also see \hyperref[sec:highlights]{\secref{sec:highlights}} - \hyperref[item:const-curv-4]{\ref{item:const-curv-4}}
	\smallskip
	\item []{\footnotesize \textsf{Proof of \hyperref[item:class-3-c]{\textrm{Case~$\sf III$c}}}}.
	Since $\sigma$ has two critical points, using \hyperref[lem:const-scalar-one-crit]{Lemma~\ref{lem:const-scalar-one-crit}}, we must have ${\mathdutchcal{s}} > 0$ and 
	\begin{align*}
		\zeta = 
		\left(\nicefrac{{\mathdutchcal{s}}_{\sf level}}{{\mathdutchcal{s}}}\right)^{\nicefrac{1}{2}} \sin\left( {\mathdutchcal{s}}^{\nicefrac{1}{2}} \, u \right).
	\end{align*}
	Consequently, $\K^{\perp} = - \nicefrac{\zeta_{uu}}{\zeta} = {\mathdutchcal{s}}$. In the same fashion as in the proof of \hyperref[item:class-2-c]{\footnotesize\textrm{Case~$\sf II$c}} above, one obtains
	\begin{align*}
		\K(X\wedge Y) = \K^{\perp} = \mathdutchcal{s},
	\end{align*} 
	and the conclusion follows.
\end{enumerate}
\phantom{}\hfill \scalebox{1.2}{\qed} 

\vspace{10mm}
\end{document}